\documentclass[11pt,fleqn]{article}

\usepackage{amssymb, amsthm, amsmath}

\setlength{\textheight}{22cm}
\setlength{\textwidth}{15cm}
\setlength{\oddsidemargin}{0.5cm}
\setlength{\evensidemargin}{0.5cm}
\setlength{\topmargin}{0cm}

\numberwithin{equation}{section}

\theoremstyle{plain}
\newtheorem{theorem}{Theorem}[section]
\newtheorem{lemma}{Lemma}[section]

\theoremstyle{definition}
\newtheorem{definition}{Definition}[section]

\theoremstyle{remark}


\title{Determining nodes for semilinear parabolic equations}

\author{Ry\^{o}hei Kakizawa\thanks{Graduate School of Mathematical Sciences, The University of Tokyo, 3-8-1 Komaba Meguro-ku Tokyo 153-8914, Japan (\textit{E-mail address:} kakizawa@ms.u-tokyo.ac.jp)}}

\date{}


\begin{document}

\maketitle

\begin{abstract}
We are concerned with the uniqueness of the asymptotic behavior of strong solutions of the initial-boundary value problem for general semilinear parabolic equations by the asymptotic behavior of these strong solutions on a finite set of an entire domain.
More precisely, if the asymptotic behavior of a strong solution is known on an appropriate finite set, then the asymptotic behavior of a strong solution itself is entirely determined in a domain.
We prove the above property by the energy method.
\end{abstract}


\section{Introduction}
Let $\Omega$ be a bounded domain in $\mathbb{R}^n$ $(n \in \mathbb{Z}, \ n\geq 2)$ with its $C^{0,1}$-boundary $\partial\Omega$, $H$ be a closed subspace of $L^{2}(\Omega)$, $V=H^{1}_{0}(\Omega)\cap H$.
Our problem is the following strong formulation of the initial-boundary value problem for the semilinear parabolic equation:
\begin{equation}
\begin{split}
&d_{t}u+Au+Bu=f & \mathrm{in} \ L^{2}((0,\infty);H), \\
&u(0)=u_{0} & \mathrm{in} \ V,
\end{split}
\end{equation}
where $u$ is a strong solution of (1.1), $A$ is a closed linear operator from $D(A)$ to $H$, $B$ is a nonlinear operator from $D(B)$ to $H$, $f$ is a nonhomogeneous term, $u_{0}$ is an initial data of $u$.
Moreover, $D(A)$ and $D(B)$ are domains of $A$ and $B$ respectively.
A typical example of $(1.1)_{1}$ is the following semilinear heat equation:
\begin{equation*}
\partial_{t}u-k\Delta u-|u|^{p-1}u=0,
\end{equation*}
where $k>0$, $p>1$.
The existence and uniqueness of strong solutions of the initial-boundary value problem for the semilinear heat equation has been much studied for fifty years.
The stationary problem of (1.1) is the following boundary value problem for the semilinear elliptic equation:
\begin{equation}
A\bar{u}+B\bar{u}=\bar{f} \ \mathrm{in} \ H,
\end{equation}
where $\bar{u}$ is a strong solution of (1.2), $\bar{f}$ is a nonhomogeneous term.
It is well known in \cite{Rabinowitz} that the stationary problem for the semilinear heat equation have a trivial solution and nontrivial solutions.
It is one of interesting questions whether a strong solution of (1.1) converges to a trivial or nontrivial solution of (1.2).
The conclusion for asymptotic properties of strong solutions of (1.1) can be given by the theory of determining nodes introduced by Foias and Temam \cite{Foias 2}.
The approach of determining nodes is quite natural from the computational point of view.
In general, strong solutions of the initial-boundary value problem for semilinear parabolic equations is uniquely determined by determining nodes which can be obtained from finite many measurements.
Some problems related to determining nodes for semilinear parabolic equations have been discussed.
It is proved by Foias and Kukavica \cite{Foias 1}, Kukavica \cite{Kukavica} and Oliver and Titi \cite{Oliver} that there exist determining nodes for the Kuramoto-Shivashinsky equation, the complex Ginzbrug-Landau equation and the semilinear Schr\"{o}dinger equation respectively.
In recent years, Lu and Shao \cite{Lu} studied the existence of determining nodes for partly dissipative reaction diffusion systems including the FitzHugh-Nagumo equations.
However, determining nodes for the semilinear heat equation have been not considered yet.
It is necessary to discuss the existence of determining nodes for general semilinear parabolic equations.

In this paper, we are concerned with the determination of the asymptotic behavior of strong solutions of (1.1) by determining nodes.
The theory of determining nodes for not only the Navier-Stokes equations but also the semilinear heat equation can be unified.
One of our main results is stated as follows: There exists a finite set $E$ of determining nodes such that if $u(x,t)-v(x,t)\rightarrow 0$ as $t\rightarrow \infty$ for any $x \in E$, then $u(\cdot,t)-v(\cdot,t)\rightarrow 0$ as $t\rightarrow \infty$ in $V\cap C^{0,\mu}(\overline{\Omega})$ for any $0<\mu<1/2$.
We prove the above results by the argument based on \cite{Foias 2, Lu}.

This paper is organized as follows: In section 2, we define function spaces, basic notation used in this paper and strong solutions of (1.1) and (1.2), and state our main results and some lemmas for them.
We prove our main results in section 3.
Finally, we apply our main results to the semilinear heat equation and the Navier-Stokes equations in section 4.

\section{Preliminaries and main results}
\subsection{Function spaces}
All functions appearing in this paper are either $H$ or $H^{n}$-valued.
For the sake of simplicity, we will not distinguish them from their values in notation.

The norm in $L^{p}(\Omega)$ $(1\leq p\leq\infty)$ and in $H^{m}(\Omega)$ (the Sobolev space, $m \in \mathbb{Z}, \ m\geq 0$) are denoted by $\|\cdot\|_{L^{p}(\Omega)}$ and $\|\cdot\|_{H^{m}(\Omega)}$ respectively, $H^{0}(\Omega)=L^{2}(\Omega)$.
Moreover, the scalar product in $L^{2}(\Omega)$ and in $H^{m}(\Omega)$ are denoted by $(\cdot,\cdot)_{L^{2}(\Omega)}$ and $(\cdot,\cdot)_{H^{m}(\Omega)}$ respectively.
$C^{\infty}_{0}(\Omega)$ is the set of all functions which are infinitely differentiable and have compact support in $\Omega$.
$H^{1}_{0}(\Omega)$ is the completion of $C^{\infty}_{0}(\Omega)$ in $H^{1}(\Omega)$.
$H^{1}_{0}(\Omega)$ is characterized as $H^{1}_{0}(\Omega)=\{ u \in H^{1}(\Omega) \ ; \ u|_{\partial\Omega}=0\}$.
It is well known in the theory of Hilbert spaces that $L^{2}(\Omega)$ is decomposed into $L^{2}(\Omega)=H\oplus H^{\perp}$, where $H^{\perp}$ is the orthogonal complement of $H$.
Let $P$ be the orthogonal projection of $L^{2}(\Omega)$ onto $H$.
The norm in $C(\overline{\Omega})$ is denoted by $\|\cdot\|_{C(\overline{\Omega})}$.
$C^{0,\mu}(\overline{\Omega})$ $(0<\mu\leq 1)$ is the Banach space of all functions which are uniformly H\"{o}lder continuous with the exponent $\mu$ on $\overline{\Omega}$.
The norm in $C^{0,\mu}(\overline{\Omega})$ is denoted by $\|\cdot\|_{C^{0,\mu}(\overline{\Omega})}$, that is,
\begin{equation*}
\|u\|_{C^{0,\mu}(\overline{\Omega})}:=\|u\|_{C(\overline{\Omega})}+[[u]]_{C^{0,\mu}(\overline{\Omega})}, \ [[u]]_{C^{0,\mu}(\overline{\Omega})}:=\sup_{x,y \in \overline\Omega, x \neq y}\frac{|u(x)-u(y)|}{|x-y|^{\mu}}.
\end{equation*}
Let $I$ be an open interval in $\mathbb{R}$, $(X,\|\cdot\|_{X})$ be a Banach space.
$L^{p}(I;X)$ $(1\leq p<\infty)$ is the Banach space of all $X$-valued functions $u$ which $u$ is strongly measurable and $\|u(t)\|^{p}_{X}$ is integrable in $I$.
$L^{\infty}(I;X)$ is the Banach space of all $X$-valued functions $u$ which $u$ is strongly measurable and $\|u(t)\|_{X}$ is essentially bounded in $I$.
The norm in $L^{p}(I;X)$ and in $L^{\infty}(I;X)$ are denoted by $\|\cdot\|_{L^{p}(I;X)}$ and $\|\cdot\|_{L^{\infty}(I;X)}$ respectively.
In the case where $I$ is a bounded closed interval in $\mathbb{R}$, $C(I;X)$ is the Banach space of all $X$-valued functions which are continuous on $I$.
If $I$ is not bounded or closed, $C_{b}(I;X)$ is the Banach space of all $X$-valued functions which are bounded and continuous in $I$.
The norm in $C(I;X)$ and in $C_{b}(I;X)$ is denoted by $\|\cdot\|_{C(I;X)}$ and $\|\cdot\|_{C_{b}(I;X)}$ respectively.

\subsection{Strong solutions of \rm{(1.1)}, \rm{(1.2)}}
Let us define the closed linear operator $A$ and the nonlinear operator $B$ which appeared in (1.1).
$Au=-a\Delta u$ $(a>0)$ is a typical example of $A$, the norm induced by $A$ is equivalent to a norm in $H^{2}(\Omega)\cap H^{1}_{0}(\Omega)$.
It is important for our main results that $Bu=-|u|^{p-1}u$ $(p>1)$ and $Bu=P(u\cdot\nabla)u$ can be considered.
$A$ is a closed linear operator from $D(A):=H^{2}(\Omega)\cap V$ to $H$ defined as
\begin{equation*}
Au=-P\left\{\sum^{n}_{i,j=1}\partial_{x_{j}}(a_{ij}\partial_{x_{i}}u)\right\},
\end{equation*}
$B$ is a nonlinear operator from $D(B):=H^{2}(\Omega)\cap V$ to $H$.
$A$ and $B$ are assumed to the following properties:
\begin{itemize}
\item[(A.1)]$a_{ij} \in C^{0,1}(\overline{\Omega})$ $(i,j=1,\cdots, n)$.
\item[(A.2)]$a_{ij}=a_{ji} \ \mathrm{on} \ \overline{\Omega}$ $(i,j=1,\cdots, n)$.
\item[(A.3)]There exists a positive constant $a$ such that
\begin{equation*}
\sum^{n}_{i,j=1}a_{ij}(x)\xi_{i}\xi_{j}\geq a|\xi|^{2}
\end{equation*}
for any $x \in \overline{\Omega}$, $\xi \in \mathbb{R}^{n}$.
\item[(A.4)]Let $(u,v)_{D(A)}=(Au,Av)_{L^{2}(\Omega)}$, $\|u\|_{D(A)}=((u,u)_{D(A)})^{1/2}$.
Then $\|\cdot\|_{D(A)}$ is equivalent to $\|\cdot\|_{H^{2}(\Omega)}$ as a norm in $D(A)$: there exist positive constants $a_{1}$ and $a_{2}$ such that
\begin{equation*}
a_{1}\|u\|_{H^{2}(\Omega)} \leq \|u\|_{D(A)} \leq a_{2}\|u\|_{H^{2}(\Omega)}
\end{equation*}
for any $u \in D(A)$.
\item[(B.1)]$B0=0$.
\item[(B.2)]There exist constants $C_{B}>0$ and $p>1$ such that
\begin{equation*}
\|Bu-Bv\|_{L^{2}(\Omega)}\leq C_{B}(1+\|u\|_{H^{2}(\Omega)}^{p-1}+\|v\|_{H^{2}(\Omega)}^{p-1})\|u-v\|_{H^{1}(\Omega)}
\end{equation*}
for any $u, v \in D(B)$.
\end{itemize}
Let us introduce the following scalar product and norm in $V$:
\begin{equation*}
(u,v)_{a}=\sum^{n}_{i,j=1}(a_{ij}\partial_{x_{i}}u,\partial_{x_{j}}v)_{L^{2}(\Omega)}, \ \|u\|_{a}=((u,u)_{a})^{1/2}.
\end{equation*}
It follows from (A.3) and the Schwarz inequality that $\|\cdot\|_{a}$ is equivalent to $\|\cdot\|_{H^{1}(\Omega)}$ as a norm in $V$: there exist positive constants $a_{3}$ and $a_{4}$ such that
\begin{equation*}
a_{3}\|u\|_{H^{1}(\Omega)} \leq \|u\|_{a} \leq a_{4}\|u\|_{H^{1}(\Omega)}
\end{equation*}
for any $u \in V$.
Strong solutions of (1.1), (1.2) are defined as follows:
\begin{definition}
Let $f \in L^{2}((0,\infty);H)$, $u_{0} \in V$.
Then $u$ is called a strong solution of (1.1) if $u \in L^{2}((0,\infty);D(A))\cap C_{b}([0,\infty);V)$, $d_{t}u \in L^{2}((0,\infty);H)$, $u$ satisfies (1.1).
Let $\mathcal{S}(f, u_{0})$ be the set of all functions which are strong solutions of (1.1) with $f$ and $u_{0}$.
\end{definition}
\begin{definition}
Let $\bar{f} \in H$.
Then $\bar{u}$ is called a strong solution of (1.2) if $\bar{u} \in D(A)$, $\bar{u}$ satisfies (1.2).
Let $\mathcal{S}(\bar{f})$ be the set of all functions which are solutions of (1.2) with $\bar{f}$.
\end{definition}

\subsection{Main results}
For any $N \in \mathbb{Z}$, $N\geq 1$, $x \in \overline{\Omega}$ and $u \in D(A)$, $E_{N}$, $d_{N}(x)$, $d_{N}$ and $\eta_{N}(u)$ are defied as follows:
\begin{equation*}
E_{N}=\{x_{1}, \cdots, x_{N} \ ; \ x_{j} \in \overline{\Omega} \ (j=1, \cdots, N)\},
\end{equation*}
\begin{equation*}
d_{N}(x)=\min_{j=1, \cdots, N}|x-x_{j}|,
\end{equation*}
\begin{equation*}
d_{N}=\max_{x \in \overline{\Omega}}d_{N}(x),
\end{equation*}
\begin{equation*}
\eta_{N}(u)=\max_{j=1, \cdots, N}|u(x_{j})|.
\end{equation*}
We can consider $E_{N}$ and $d_{N}$ as the set of determining nodes and the density of $E_{N}$ in $\Omega$ respectively.
It is essential for our main results to be assumed that
\begin{itemize}
\item[(H.1)]$\mathcal{S}(\bar{f})\neq\emptyset$ for any $\bar{f} \in H$.
\item[(H.2)]There exists a positive constant $M(\bar{f})$ for any $\bar{f} \in H$ such that $\|\bar{u}\|_{D(A)}\leq M(\bar{f})$ for any $\bar{u} \in \mathcal{S}(\bar{f})$.
\item[(H.3)]$\mathcal{S}(f,u_{0})\neq\emptyset$ for any $f \in L^{\infty}((0,\infty);H)$, $u_{0} \in V$.
\item[(H.4)]There exists a positive constant $M(f,u_{0},t_{0})$ for any $f \in L^{\infty}((0,\infty);H)$, $u_{0} \in V$, $t_{0}>0$ such that $\|u\|_{C_{b}([t_{0},\infty);D(A))}\leq M(f,u_{0},t_{0})$ for any $u \in \mathcal{S}(f,u_{0})$.
\end{itemize}
Our main results are given by Theorems 2.1--2.3.
\begin{theorem}
Let $n=2, 3$, $\bar{f} \in H$, $\bar{u},\bar{v} \in \mathcal{S}(\bar{f})$, and assume $\mathrm{(H.1)}$, $\mathrm{(H.2)}$.
Then there exists a positive constant $\delta_{1}$ depending only on $\Omega$, $A$, $B$ and $M(\bar{f})$ such that if $0<d_{N}\leq\delta_{1}$, $\bar{u}(x_{j})=\bar{v}(x_{j})$ $(j=1, \cdots, N)$, then $\bar{u}=\bar{v}$ in $\Omega$.
\end{theorem}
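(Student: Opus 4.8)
The plan is to set $w=\bar u-\bar v$ and show $w\equiv 0$ by an energy estimate in the $\|\cdot\|_a$ norm, exploiting that $w$ vanishes at all nodes of $E_N$. Subtracting the two copies of $(1.2)$ gives $Aw = -(B\bar u-B\bar v)$ in $H$. Testing against $w$ (which lies in $V$) and using the definition of $(\cdot,\cdot)_a$, we get
\begin{equation*}
\|w\|_a^2=(Aw,w)_{L^2(\Omega)}=-(B\bar u-B\bar v,w)_{L^2(\Omega)}\leq\|B\bar u-B\bar v\|_{L^2(\Omega)}\|w\|_{L^2(\Omega)}.
\end{equation*}
By (B.2) and (H.2), $\|B\bar u-B\bar v\|_{L^2(\Omega)}\leq C_B\bigl(1+2M(\bar f)^{p-1}/a_1^{p-1}\bigr)\|w\|_{H^1(\Omega)}\leq C_1(\bar f)\|w\|_a$, where $a_1$ converts the $D(A)$-bound of (H.2)/(A.4) into an $H^2$-bound and then into the $\|\cdot\|_a$-scale; so $\|w\|_a^2\leq C_1(\bar f)\|w\|_a\|w\|_{L^2(\Omega)}$, i.e.
\begin{equation*}
\|w\|_a\leq C_1(\bar f)\|w\|_{L^2(\Omega)}.
\end{equation*}
The point is now to bound $\|w\|_{L^2(\Omega)}$ by the node values (which are zero) plus a small multiple of $\|w\|_a$.

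The key step — and the main obstacle — is a \emph{discrete Poincar\'e-type inequality}: there is a constant $c=c(\Omega)$ such that for all $w\in V$ and all node configurations $E_N$,
\begin{equation*}
\|w\|_{L^2(\Omega)}^2\leq c\Bigl(d_N^2\,\|w\|_a^2+d_N^{n}\sum_{j=1}^N|w(x_j)|^2\Bigr),
\end{equation*}
or a variant with $\eta_N(w)^2$ in place of the sum. This is where $n=2,3$ enters: one needs $H^2(\Omega)\hookrightarrow C(\overline\Omega)$ (valid for $n\le 3$) so that point values make sense and, more importantly, a Lipschitz-type control $|w(x)-w(x_j)|\lesssim |x-x_j|^{1/2}\|w\|_{H^2}$ is available; actually since $\bar u,\bar v\in D(A)$ we have $w\in H^2(\Omega)\subset C^{0,\mu}(\overline\Omega)$ for $\mu<1/2$ when $n=3$ (and any $\mu<1$ when $n=2$). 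I would cover $\overline\Omega$ by balls of radius $\sim d_N$ each containing some $x_j$, write $w(x)=w(x_j)+(w(x)-w(x_j))$ on each ball, and integrate; the oscillation term is controlled by a local Poincar\'e inequality giving $d_N^2\|\nabla w\|_{L^2}^2$, and the node term by the measure $\sim d_N^n$ of each ball. This lemma should be stated and proved separately (it is presumably one of the ``some lemmas'' promised in Section 2); the compactness/covering argument and the dependence of $c$ only on $\Omega$ (not on $N$ or the particular $E_N$) is the delicate bookkeeping.

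Granting this lemma, the conclusion is immediate. With $w(x_j)=\bar u(x_j)-\bar v(x_j)=0$ for all $j$, the inequality reduces to $\|w\|_{L^2(\Omega)}^2\leq c\,d_N^2\,\|w\|_a^2\leq c\,a_4^{2}/a_3^{2}\cdot d_N^2\|w\|_a^2$ after converting norms, hence combined with $\|w\|_a\leq C_1(\bar f)\|w\|_{L^2(\Omega)}$ we obtain
\begin{equation*}
\|w\|_a\leq C_1(\bar f)\,c^{1/2}\,d_N\,\|w\|_a.
\end{equation*}
Choosing $\delta_1$ so that $C_1(\bar f)\,c^{1/2}\,\delta_1<1$ — and noting $\delta_1$ then depends only on $\Omega$, $A$, $B$, and $M(\bar f)$ through $C_1$ and $c$ — the hypothesis $0<d_N\le\delta_1$ forces $(1-C_1(\bar f)c^{1/2}d_N)\|w\|_a\le 0$, so $\|w\|_a=0$, and by the norm equivalence $w=0$ in $H^1_0(\Omega)$, i.e. $\bar u=\bar v$ in $\Omega$. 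Two routine points to check along the way: that $w$ is an admissible test function (it is, since $D(A)\subset V$) and that all constants absorbed into $C_1$ and $c$ are independent of the initial data and of $N$; the $t\to\infty$ limiting and $C^{0,\mu}$ statements of the introduction do not arise here since Theorem 2.1 is the purely stationary case.
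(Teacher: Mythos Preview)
Your overall strategy---subtract the equations, extract an energy inequality, and close it with a node-based Poincar\'e-type bound---matches the paper's, but the specific lemma you propose is false, and this is a genuine gap. The inequality
\[
\|w\|_{L^2(\Omega)}^2 \;\le\; c\,d_N^2\,\|w\|_a^2
\qquad\text{for }w\in D(A),\ w(x_j)=0,
\]
cannot hold in $n\ge 2$ with only the $H^1$-level norm $\|\cdot\|_a$ on the right. The reason is that points have zero $H^1$-capacity in $n\ge 2$: given any fixed $\phi\in H^1_0(\Omega)$ and any node set $E_N$, one can multiply $\phi$ by smooth cutoffs supported in balls $B(x_j,\epsilon)$ to force vanishing at every $x_j$ while changing $\|\nabla\phi\|_{L^2}$ by as little as one likes (take $\epsilon$ small enough; in $n=3$ each cutoff costs $O(\epsilon)$ in $\|\nabla\cdot\|_{L^2}^2$). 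The resulting functions lie in $H^2$ but have $\|w\|_{L^2}/\|\nabla w\|_{L^2}$ bounded below independently of $d_N$. Your proof sketch breaks precisely where you invoke ``a local Poincar\'e inequality giving $d_N^2\|\nabla w\|_{L^2}^2$'': local Poincar\'e controls $\|w-\mathrm{avg}(w)\|_{L^2}$, not $\|w-w(x_j)\|_{L^2}$, and replacing the average by a point value in $n\ge 2$ costs an $H^2$ norm, not an $H^1$ norm.

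The paper's lemma (its Lemma~2.3, from Foias--Temam) is the correct replacement:
\[
\|w\|_{H^1(\Omega)} \;\le\; C_4\,d_N^{-1/4}\eta_N(w) + C_5\,d_N^{1/4}\,\|w\|_{D(A)},
\]
with the $H^2$-level norm $\|\cdot\|_{D(A)}$ on the right. Because of this, the paper does \emph{not} test against $w$; it simply takes the $L^2$-norm of $Aw=-(B\bar u-B\bar v)$ to get
\[
\|w\|_{D(A)} \;=\; \|B\bar u-B\bar v\|_{L^2(\Omega)} \;\le\; 2C_B\,M(\bar f)^{p-1}\,\|w\|_{H^1(\Omega)},
\]
and then feeds in Lemma~2.3 (with $\eta_N(w)=0$) to obtain
\[
\|w\|_{D(A)} \;\le\; 2C_B C_5\,M(\bar f)^{p-1}\,d_N^{1/4}\,\|w\|_{D(A)},
\]
which closes for $d_N$ small. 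Your route can be repaired by using the correct lemma and then invoking $\|w\|_{D(A)}\le C\|w\|_a$ (which you already have from the equation and (B.2)) to return to the $a$-norm, but that is a detour; working at the $D(A)$ level from the start, as the paper does, is both shorter and avoids the need to pair with $w$ at all.
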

\begin{theorem}
Let $n=2, 3$, $f \in L^{\infty}((0,\infty);H)$, $u_{0} \in V$, $u \in \mathcal{S}(f,u_{0})$, and assume $\mathrm{(H.2)}$--$\mathrm{(H.4)}$, $f(t)\rightarrow  f_{\infty} \in H$ in $H$ as $t \to \infty$.
Then there exists a positive constant $\delta_{2}$ depending only on $\Omega$, $A$, $B$, $f$, $M(f_{\infty})$ and $M(f,u_{0},t_{0})$ such that if $0<d_{N}\leq\delta_{2}$, $u(x_{j},t)\rightarrow \xi_{j} \in \mathbb{R}$ as $t \to \infty$ $(j=1, \cdots, N)$, then $\mathrm{(1.2)}$ has uniquely a strong solution $u_{\infty} \in \mathcal{S}(f_{\infty})$ satisfying $u(t)\rightarrow  u_{\infty}$ in $V\cap C^{0,\mu}(\overline{\Omega})$ as $t\rightarrow \infty$ for any $0<\mu<1/2$ and $u_{\infty}(x_{j})=\xi_{j}$ $(j=1, \cdots, N)$.
\end{theorem}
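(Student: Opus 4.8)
The plan is to study the $\omega$-limit set of the trajectory $u$ in $V\cap C^{0,\mu}(\overline\Omega)$, to prove that it is contained in $\mathcal S(f_\infty)$ with every one of its points taking the value $\xi_j$ at $x_j$, and then to apply Theorem 2.1 to conclude that it reduces to a single point; convergence of $u(t)$ to that point then follows from relative compactness of the trajectory. First I would fix $t_0>0$ and note, by $\mathrm{(H.4)}$ and $\mathrm{(A.4)}$, that $\{u(t):t\ge t_0\}$ is bounded in $H^2(\Omega)\cap V$; since $n\le 3$ the embeddings $H^2(\Omega)\cap V\hookrightarrow V$ and $H^2(\Omega)\cap V\hookrightarrow C^{0,\mu}(\overline\Omega)$ ($0<\mu<1/2$) are compact, so the trajectory is relatively compact in $V\cap C^{0,\mu}(\overline\Omega)$, the $\omega$-limit set $\omega(u):=\bigcap_{s\ge t_0}\overline{\{u(t):t\ge s\}}$ is non-empty, and $\mathrm{dist}(u(t),\omega(u))\to0$ in $V\cap C^{0,\mu}(\overline\Omega)$ as $t\to\infty$. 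From $(1.1)_{1}$, $\|d_tu(t)\|_H\le\|Au(t)\|_H+\|Bu(t)\|_H+\|f(t)\|_H$, which by $\mathrm{(H.4)}$, $\mathrm{(B.1)}$--$\mathrm{(B.2)}$ and $f\in L^\infty((0,\infty);H)$ is bounded uniformly in $t\ge t_0$.

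The key step is to show $\omega(u)\subseteq\mathcal S(f_\infty)$ with nodal values $\xi_j$. Given $\phi\in\omega(u)$, I would pick $t_k\to\infty$ with $u(t_k)\to\phi$ in $V\cap C^{0,\mu}(\overline\Omega)$ and set $u_k(t):=u(t+t_k)$. On each $[-T,T]$ the family $\{u_k\}$ is bounded in $L^\infty(-T,T;D(A))\cap W^{1,\infty}(-T,T;H)$, so by the Aubin--Lions--Simon lemma and the compact embedding $D(A)\hookrightarrow C^{0,\mu}(\overline\Omega)$ a subsequence converges in $C([-T,T];V\cap C^{0,\mu}(\overline\Omega))$; a diagonal argument produces a limit $v$ with $v\in L^\infty(\mathbb R;D(A))$, $\|v\|_{L^\infty(\mathbb R;D(A))}\le M(f,u_0,t_0)$, $d_tv\in L^\infty(\mathbb R;H)$ and $v(0)=\phi$. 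Passing to the limit in $d_tu_k+Au_k+Bu_k=f(\cdot+t_k)$ — with $Au_k$ converging weakly-$*$ in $L^\infty_{\mathrm{loc}}(\mathbb R;H)$, $Bu_k\to Bv$ in $L^2_{\mathrm{loc}}(\mathbb R;H)$ by $\mathrm{(B.2)}$ and the uniform $H^2$-bound, and $f(\cdot+t_k)\to f_\infty$ in $L^2_{\mathrm{loc}}(\mathbb R;H)$ — yields $d_tv+Av+Bv=f_\infty$ on $\mathbb R$, i.e.\ a bounded eternal solution of the autonomous problem; and $v(x_j,t)=\lim_k u(x_j,t+t_k)=\xi_j$ for all $t$ and $j$, the uniform $C^{0,\mu}(\overline\Omega)$-convergence being used here.

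It remains to see that $v$ is constant in $t$. For $h>0$, $w:=v(\cdot+h)-v(\cdot)$ solves $d_tw+Aw+\bigl(Bv(\cdot+h)-Bv(\cdot)\bigr)=0$, with $\eta_N(w)\equiv0$ and $\|w\|_{L^\infty(\mathbb R;D(A))}\le 2M(f,u_0,t_0)$. Pairing with $Aw$ in $H$ and using $\mathrm{(A.2)}$, $\mathrm{(A.4)}$, $\mathrm{(B.2)}$, an interpolation inequality between $L^2(\Omega)$ and $D(A)$, and the generalized Poincar\'{e} (node) inequality $\|w\|_{L^2(\Omega)}\le C\bigl(\eta_N(w)+d_N^{\mu}\|w\|_{D(A)}\bigr)$, and then absorbing the $d_N^{\mu}\|Aw\|_H^2$ term into the dissipation — legitimate once $d_N\le\delta_2$ — I expect to arrive at $\tfrac{d}{dt}\|w(t)\|_a^2+2\lambda\|w(t)\|_a^2\le0$ with $\lambda>0$ depending only on $\Omega$, $A$, $B$, $M(f,u_0,t_0)$. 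Then $\|w(t)\|_a^2\le e^{-2\lambda(t-s)}\|w(s)\|_a^2$ for $t\ge s$; letting $s\to-\infty$, the right-hand side being bounded uniformly in $s$, forces $w\equiv0$. Hence $v(\cdot+h)=v(\cdot)$ for every $h>0$, so $v\equiv\phi$, $A\phi+B\phi=f_\infty$, and $\phi\in\mathcal S(f_\infty)$ with $\phi(x_j)=\xi_j$.

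Finally, I would take $\delta_2$ no larger than the Theorem 2.1 constant $\delta_1$ associated with $\bar f=f_\infty$ (which depends only on $\Omega$, $A$, $B$, $M(f_\infty)$), in addition to the smallness already required. Since $\omega(u)\subseteq\mathcal S(f_\infty)$ and all its members equal $\xi_j$ at $x_j$, Theorem 2.1 gives $\omega(u)=\{u_\infty\}$ for a unique $u_\infty\in\mathcal S(f_\infty)$ with $u_\infty(x_j)=\xi_j$; combined with $\mathrm{dist}(u(t),\omega(u))\to0$ this yields $u(t)\to u_\infty$ in $V\cap C^{0,\mu}(\overline\Omega)$ for the $\mu$ fixed above, and then for every $0<\mu<1/2$ by interpolating against the uniform bound (valid since $H^2(\Omega)\cap V\hookrightarrow C^{0,1/2}(\overline\Omega)$). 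Uniqueness of $u_\infty$ is automatic, and the resulting $\delta_2$ depends only on $\Omega$, $A$, $B$, $f$, $M(f_\infty)$, $M(f,u_0,t_0)$. I expect the main obstacle to be precisely this key step — verifying that every $\omega$-limit point is a genuine stationary solution with the prescribed nodal values: this needs the extra regularity $d_tu\in L^\infty(H)$ so that Aubin--Lions delivers convergence strong enough to transfer the nodal limits, careful handling of the non-autonomous forcing $f(\cdot+t_k)\to f_\infty$ in the limit, and the energy--node estimate to exclude non-constant bounded eternal solutions with frozen nodal values; keeping precise track of which data $\delta_2$ depends on is the remaining bookkeeping.
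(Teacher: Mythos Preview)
Your argument is correct but follows a genuinely different route from the paper. The paper works directly with the original trajectory: setting $v(t)=u(t+\tau)$ and $g(t)=f(t+\tau)$ for an arbitrary $\tau>0$, it pairs the difference equation with $A(u-v)$, invokes $\mathrm{(B.2)}$ and Lemma~2.3 (the $H^1$ node inequality with exponent $d_N^{1/4}$, not the $L^2$ one you cite), and arrives at $d_t\|u(t)-u(t+\tau)\|_a^2+\lambda\|u(t)-u(t+\tau)\|_a^2\le h(t)$ with $h(t)\to 0$. Gronwall then shows $\{u(t)\}_{t\ge t_0}$ is Cauchy in $V$; the limit $u_\infty$ exists by completeness, the $C^{0,\mu}$ convergence comes from the compact embedding plus $\mathrm{(H.4)}$, and $u_\infty\in\mathcal S(f_\infty)$ is obtained by passing to the limit directly in $(1.1)_1$, with Theorem~2.1 supplying uniqueness. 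Your $\omega$-limit-set approach instead uses Aubin--Lions to build a bounded eternal solution $v$ through each $\omega$-limit point, and then applies \emph{the same} energy--node estimate to $w=v(\cdot+h)-v(\cdot)$, sending $s\to-\infty$ rather than $t\to+\infty$ to force $w\equiv 0$. The paper's route is more economical --- one Gronwall, no Aubin--Lions, no eternal solutions --- while yours is more dynamical-systems in spirit and makes explicit why the limit must be a stationary state; the core analytic ingredient (the energy inequality combined with the node estimate under $d_N\le\delta_2$) is identical in both.
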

\begin{theorem}
Let $n=2, 3$, $f, g \in L^{\infty}((0,\infty);H)$, $u_{0}, v_{0} \in V$, $u \in \mathcal{S}(f,u_{0})$, $v \in \mathcal{S}(g,v_{0})$, and assume $\mathrm{(H.3)}$, $\mathrm{(H.4)}$.
Then there exists a positive constant $\delta_{3}$ depending only on $\Omega$, $A$, $B$, $f$, $g$, $M(f,u_{0},t_{0})$ and $M(g,v_{0},t_{0})$ such that if $0<d_{N}\leq\delta_{3}$, $u(x_{j},t)-v(x_{j},t)\rightarrow 0$ $(j=1, \cdots, N)$, $f(t)-g(t)\rightarrow 0$ in $H$ as $t\rightarrow \infty$, then $u(t)-v(t)\rightarrow 0$ in $V\cap C^{0,\mu}(\overline{\Omega})$ as $t\rightarrow \infty$ for any $0<\mu<1/2$.
\end{theorem}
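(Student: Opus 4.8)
The plan is to apply the energy method to the difference $w:=u-v$ at the level of the graph norm $\|\cdot\|_{D(A)}$, to close the resulting differential inequality by means of the interpolation inequality for determining nodes established in Section~2, and finally to upgrade the $V$-convergence so obtained to convergence in $C^{0,\mu}(\overline{\Omega})$ by interpolating against the uniform $H^2(\Omega)$-bound furnished by $\mathrm{(H.4)}$. Concretely, fix some $t_0>0$ and work on $[t_0,\infty)$, where $\mathrm{(H.4)}$ gives $\|u(t)\|_{D(A)},\|v(t)\|_{D(A)}\le M:=\max\{M(f,u_0,t_0),M(g,v_0,t_0)\}$ and hence, by $\mathrm{(A.4)}$, a uniform bound for $\|u(t)\|_{H^2(\Omega)}$ and $\|v(t)\|_{H^2(\Omega)}$. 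Subtracting the two copies of $(1.1)_{1}$, $w$ solves $d_tw+Aw+(Bu-Bv)=f-g$ in $L^2((0,\infty);H)$. I would test this equation with $Aw$ in $L^2(\Omega)$; this is licit because $w(t)\in D(A)$ for a.e.\ $t$ and $d_tw\in L^2((0,\infty);H)$, and it produces $(d_tw,Aw)_{L^2(\Omega)}=\tfrac12 d_t\|w\|_a^2$ by $\mathrm{(A.2)}$ and $(Aw,w)_{L^2(\Omega)}=\|w\|_a^2$ by the definition of $A$ and $w(t)\in V$. Estimating the right-hand side by Cauchy--Schwarz, by $\mathrm{(B.2)}$ together with the uniform $H^2(\Omega)$-bounds (so that $\|Bu-Bv\|_{L^2(\Omega)}\le C_{\ast}\|w\|_{H^1(\Omega)}$ with $C_{\ast}$ depending only on $C_B,p,a_1,M$), and by Young's inequality to absorb the $\|w\|_{D(A)}$-factors, I expect the differential inequality
\begin{equation*}
d_t\|w\|_a^2+\|w\|_{D(A)}^2\le c\,\|w\|_a^2+2\|f-g\|_{L^2(\Omega)}^2\qquad\text{for a.e.\ }t\ge t_0,
\end{equation*}
with $c=c(\Omega,A,B,M)>0$.

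Next, I would substitute into the term $c\,\|w\|_a^2$ the determining-nodes interpolation inequality of Section~2, namely $\|\phi\|_a^2\le\gamma_1\,\eta_N(\phi)^2+\gamma_2\,d_N^2\,\|\phi\|_{D(A)}^2$ for $\phi\in D(A)$, whose constants $\gamma_1,\gamma_2$ depend only on $\Omega$. Choosing $\delta_3>0$ small enough that this lemma applies and that $c\,\gamma_2\,\delta_3^2\le\tfrac12$, for $0<d_N\le\delta_3$ the term $c\,\gamma_2\,d_N^2\|w\|_{D(A)}^2$ is absorbed into $\|w\|_{D(A)}^2$ on the left, leaving
\begin{equation*}
d_t\|w\|_a^2+\tfrac12\|w\|_{D(A)}^2\le c\,\gamma_1\,\eta_N(w(t))^2+2\|f(t)-g(t)\|_{L^2(\Omega)}^2=:h(t).
\end{equation*}
Since $\eta_N(w(t))=\max_{j=1,\dots,N}|u(x_j,t)-v(x_j,t)|\to0$ and $\|f(t)-g(t)\|_{L^2(\Omega)}\to0$ as $t\to\infty$ by hypothesis, $h$ is bounded on $[t_0,\infty)$ with $h(t)\to0$. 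Bounding $\|w\|_{D(A)}^2\ge c_P\|w\|_a^2$ by a Poincar\'e-type inequality ($c_P=c_P(\Omega,A)>0$) and applying Gronwall's inequality on $[t_0,t]$ gives
\begin{equation*}
\|w(t)\|_a^2\le\|w(t_0)\|_a^2\,e^{-\frac{c_P}{2}(t-t_0)}+\int_{t_0}^{t}e^{-\frac{c_P}{2}(t-s)}\,h(s)\,ds,
\end{equation*}
whose right-hand side tends to $0$ as $t\to\infty$ (the first term trivially, the second by splitting the interval of integration and using $h(t)\to0$); hence $u(t)-v(t)\to0$ in $V$. Note that $\delta_3$ depends only on $\Omega,A,B,f,g,M(f,u_0,t_0),M(g,v_0,t_0)$, since so do $c,c_P,\gamma_1,\gamma_2,a_1,a_3$.

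For the H\"older statement, I would use that on $[t_0,\infty)$ one has $\|w(t)\|_{H^2(\Omega)}\le 2M/a_1$ while $\|w(t)\|_{H^1(\Omega)}\to0$. Since $n=2,3$ and $0<\mu<1/2$, fix $s\in(n/2+\mu,\,2)$; interpolation gives $\|w(t)\|_{H^s(\Omega)}\le C\,\|w(t)\|_{H^1(\Omega)}^{2-s}\,\|w(t)\|_{H^2(\Omega)}^{s-1}\to0$, and, since $s-n/2>\mu$, the Sobolev embedding $H^s(\Omega)\hookrightarrow C^{0,\mu}(\overline{\Omega})$ --- available on the bounded $C^{0,1}$-domain $\Omega$ --- yields $\|w(t)\|_{C^{0,\mu}(\overline{\Omega})}\to0$. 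Together with the $V$-convergence this gives $u(t)-v(t)\to0$ in $V\cap C^{0,\mu}(\overline{\Omega})$ as $t\to\infty$ for every $0<\mu<1/2$, as asserted.

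The conceptual heart of the proof --- the interpolation inequality relating $\eta_N(\cdot)$, $\|\cdot\|_a$ and $\|\cdot\|_{D(A)}$ --- is the lemma already proved in Section~2, so the remaining work is largely bookkeeping; within the present argument the delicate point is the second step, where one must ensure that the constant $C_{\ast}$ coming from $\mathrm{(B.2)}$ is uniform in $t$ (this is exactly the role of $\mathrm{(H.4)}$) and that, after using the small-$d_N$ node inequality and the Poincar\'e-type lower bound, the differential inequality is genuinely dissipative. The dimensional restriction $n=2,3$ enters only in the last step, through the embedding $H^s(\Omega)\hookrightarrow C^{0,\mu}(\overline{\Omega})$ with $s<2$.
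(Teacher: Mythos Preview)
Your proof follows the same energy-method route as the paper: subtract the equations, test against $A(u-v)$, invoke the node inequality from Section~2 to absorb the $H^1$-term into the $D(A)$-term for $d_N$ small, and apply Gronwall. Two minor discrepancies worth correcting: the inequality actually proved in Lemma~2.3 is $\|w\|_{H^1(\Omega)}\le C_4 d_N^{-1/4}\eta_N(w)+C_5 d_N^{1/4}\|w\|_{D(A)}$, so after squaring the absorbable factor is $d_N^{1/2}$ rather than $d_N^{2}$ and the $\eta_N$-coefficient carries a $d_N^{-1/2}$ (not $d_N$-independent) weight---harmless here since $d_N$ is fixed once chosen and $\eta_N(w(t))\to 0$; and for the H\"older upgrade the paper uses the compact embedding $D(A)\hookrightarrow C^{0,\mu}(\overline{\Omega})$ together with the already-established $V$-convergence, whereas you interpolate through $H^s$ and embed, which is an equally valid (and more quantitative) alternative.
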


\subsection{Lemmas}
We will state lemmas for our main results.
It is important for our main results that the following inequalities relate $\|\cdot\|_{C(\overline{\Omega})}$, $\|\cdot\|_{L^{2}(\Omega)}$ and $\|\cdot\|_{H^{1}(\Omega)}$ to $d_{N}$.
\begin{lemma}
Let $n=2,3$.
Then there exists a positive constant $C_{1}$ depending only on $\Omega$ such that
\begin{equation}
\|u\|_{C(\overline{\Omega})}\leq \eta_N(u)+C_{1}d^{1/2}_{N}\|u\|_{D(A)}
\end{equation}
for any $u \in D(A)$.
\end{lemma}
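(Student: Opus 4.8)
The plan is to combine two facts. Since $n\leq 3$ and $\partial\Omega$ is Lipschitz, one has the Sobolev embedding $H^{2}(\Omega)\hookrightarrow C^{0,1/2}(\overline{\Omega})$ with a constant depending only on $\Omega$ and $n$; and, by the very definition of $d_{N}$, every point of $\overline{\Omega}$ lies within distance $d_{N}$ of one of the nodes in $E_{N}$. (That $d_{N}$ is finite and attained is clear, since $x\mapsto d_{N}(x)=\min_{j}|x-x_{j}|$ is continuous on the compact set $\overline{\Omega}$.) The estimate is then just a triangle inequality at the nearest node together with the Hölder modulus of continuity supplied by the embedding.

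First I would fix $u\in D(A)$ and record, using the embedding and then $\mathrm{(A.4)}$, that
\begin{equation*}
[[u]]_{C^{0,1/2}(\overline{\Omega})}\leq C\|u\|_{H^{2}(\Omega)}\leq Ca_{1}^{-1}\|u\|_{D(A)}
\end{equation*}
for a constant $C=C(\Omega,n)$. Then, for arbitrary $x\in\overline{\Omega}$, I would pick $j\in\{1,\dots,N\}$ with $|x-x_{j}|=d_{N}(x)\leq d_{N}$ and estimate
\begin{equation*}
|u(x)|\leq|u(x_{j})|+|u(x)-u(x_{j})|\leq\eta_{N}(u)+[[u]]_{C^{0,1/2}(\overline{\Omega})}|x-x_{j}|^{1/2}\leq\eta_{N}(u)+Ca_{1}^{-1}d_{N}^{1/2}\|u\|_{D(A)},
\end{equation*}
using the Hölder bound on $|u(x)-u(x_{j})|$ together with $|x-x_{j}|^{1/2}\leq d_{N}^{1/2}$. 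Taking the supremum over $x\in\overline{\Omega}$ yields (2.1) with $C_{1}=Ca_{1}^{-1}$.

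I do not expect a genuine obstacle. The only delicate point is the use of the Sobolev embedding into a Hölder space at the borderline exponent $\mu=1/2$, which is exactly the critical case for $n=3$ (for $n=2$ the embedding holds for every $\mu<1$, hence in particular for $\mu=1/2$), and the standard fact that for a merely Lipschitz domain the embedding constant still depends only on $\Omega$ and $n$. The constant $C_{1}$ so obtained depends on $\Omega$ and $n$ through the embedding and on $A$ only through the fixed constant $a_{1}$ of $\mathrm{(A.4)}$; alternatively one may absorb $a_{1}$ and state the intermediate $H^{2}(\Omega)$-version of the bound.
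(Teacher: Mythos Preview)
Your argument is correct: the Sobolev embedding $H^{2}(\Omega)\hookrightarrow C^{0,1/2}(\overline{\Omega})$ (valid in dimensions $n=2,3$ on a Lipschitz domain, with the exponent $1/2$ attained for $n=3$ since $2-n/2=1/2$ is not an integer) combined with the triangle inequality at the nearest node $x_{j}$ gives exactly~(2.1). The paper itself does not supply an independent proof but simply cites \cite[Lemma~2.1]{Foias 2}; your write-up is a self-contained version of the standard argument behind that reference. Your remark that the constant $C_{1}$ picks up the factor $a_{1}^{-1}$ from $\mathrm{(A.4)}$---and hence depends on $A$ as well as on $\Omega$, despite the statement---is a fair observation; the cited source is phrased in terms of the $H^{2}$-norm, where the constant genuinely depends only on $\Omega$.
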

\begin{proof}
It is \cite[Lemma 2.1]{Foias 2}.
\end{proof}
\begin{lemma}
Let $n=2,3$.
Then there exist positive constants $C_{2}$ and $C_{3}$ depending only on $\Omega$ such that
\begin{equation}
\|u\|_{L^{2}(\Omega)}\leq C_{2}\eta_{N}(u)+C_{3}d^{1/2}_{N}\|u\|_{D(A)}
\end{equation}
for any $u \in D(A)$.
\end{lemma}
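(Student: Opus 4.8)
The plan is to deduce the $L^2$-estimate from the pointwise estimate of Lemma 2.1 by a covering argument, splitting $\overline{\Omega}$ into small pieces each controlled by a single node. First I would invoke Lemma 2.1 to get $\|u\|_{C(\overline{\Omega})}\leq \eta_N(u)+C_1 d_N^{1/2}\|u\|_{D(A)}$. A crude bound $\|u\|_{L^2(\Omega)}\leq |\Omega|^{1/2}\|u\|_{C(\overline{\Omega})}$ already yields \eqref{2.2} with $C_2=|\Omega|^{1/2}C_1'$ for some constant, so in fact the lemma follows almost immediately: take $C_2=|\Omega|^{1/2}$ and $C_3=|\Omega|^{1/2}C_1$. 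So the essential content is just combining Lemma 2.1 with the continuous embedding $C(\overline{\Omega})\hookrightarrow L^2(\Omega)$ on the bounded domain $\Omega$.

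However, if one wants a sharper estimate in which $C_2$ does not carry the full volume factor — which is what matters for the determining-nodes argument, since $\eta_N(u)$ must not be amplified — I would instead argue locally. Let $\{Q_j\}_{j=1}^N$ be the Voronoi-type decomposition $Q_j=\{x\in\overline{\Omega}: |x-x_j|=d_N(x)\}$ (ties broken arbitrarily), so that $\overline{\Omega}=\bigcup_j Q_j$ up to a null set and $\mathrm{diam}(Q_j)\leq 2d_N$ is false in general — rather $|x-x_j|\leq d_N$ for $x\in Q_j$. On each $Q_j$ write $u(x)=u(x_j)+(u(x)-u(x_j))$, so
\begin{equation*}
\int_{Q_j}|u(x)|^2\,dx \leq 2|Q_j|\,|u(x_j)|^2 + 2\int_{Q_j}|u(x)-u(x_j)|^2\,dx.
\end{equation*}
The first term sums to $2|\Omega|\,\eta_N(u)^2$. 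For the second, I would use the Hölder-type bound coming from the embedding $D(A)\hookrightarrow C^{0,\mu}(\overline{\Omega})$ (valid for $n=2,3$ and $0<\mu<1/2$, as used throughout the paper), giving $|u(x)-u(x_j)|\leq [[u]]_{C^{0,\mu}(\overline{\Omega})}|x-x_j|^\mu \leq C\,\|u\|_{D(A)}\,d_N^\mu$ on $Q_j$, hence $\int_{Q_j}|u-u(x_j)|^2\leq C^2\|u\|_{D(A)}^2 d_N^{2\mu}|Q_j|$, which sums to $C^2|\Omega|\,\|u\|_{D(A)}^2 d_N^{2\mu}$.

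Summing over $j$ and taking square roots gives $\|u\|_{L^2(\Omega)}\leq C_2\eta_N(u)+C_3 d_N^{\mu}\|u\|_{D(A)}$; since $d_N$ is bounded by $\mathrm{diam}(\Omega)$, one can absorb $d_N^{\mu}$ into $d_N^{1/2}$ only when $\mu\geq 1/2$, which fails — so to land exactly the exponent $1/2$ in \eqref{2.2} I would simply quote Lemma 2.1 and the volume bound as in the first paragraph, accepting $C_2=|\Omega|^{1/2}$; alternatively, interpolating $H^1$ between $L^2$ and $D(A)$ and using the Agmon-type inequality $\|u\|_{C(\overline{\Omega})}\leq C\|u\|_{L^2}^{1/2}\|u\|_{D(A)}^{1/2}$ recovers the $d_N^{1/2}$ scaling cleanly. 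The only mild obstacle is bookkeeping: making sure the node term $\eta_N(u)$ is not multiplied by a $d_N$-dependent factor, which the Voronoi splitting handles since $\sum_j|Q_j|=|\Omega|$ independently of $N$.
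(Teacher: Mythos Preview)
Your first paragraph is a complete and correct proof: from Lemma~2.1 and the trivial bound $\|u\|_{L^{2}(\Omega)}\leq |\Omega|^{1/2}\|u\|_{C(\overline{\Omega})}$ one gets the stated inequality with $C_{2}=|\Omega|^{1/2}$ and $C_{3}=|\Omega|^{1/2}C_{1}$. The paper itself does not supply an argument at all---it simply cites \cite[Lemma~2.1]{Foias 2}---so your derivation is in fact more explicit than what the paper gives, while remaining entirely in the spirit of the cited result.

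The Voronoi/H\"older digression in your second and third paragraphs is unnecessary and, as you yourself diagnose, does not deliver the exponent $1/2$ because the available H\"older exponent is strictly below $1/2$; you are right to abandon it and fall back on the one-line argument. I would drop that material entirely and present only the first paragraph as the proof.
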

\begin{proof}
It is \cite[Lemma 2.1]{Foias 2}.
\end{proof}
\begin{lemma}
Let $n=2,3$.
Then there exist positive constants $C_{4}$ and $C_{5}$ depending only on $\Omega$ such that
\begin{equation}
\|u\|_{H^{1}(\Omega)}\leq C_{4}d^{-1/4}_{N}\eta_{N}(u)+C_{5}d^{1/4}_{N}\|u\|_{D(A)}
\end{equation}
for any $u \in D(A)$.
\end{lemma}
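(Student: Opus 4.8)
The plan is to obtain (2.5) by interpolating the $L^{2}$-estimate of Lemma 2.2 against the graph norm $\|\cdot\|_{D(A)}$, and then balancing the two resulting contributions against $d_{N}$ by an arithmetic--geometric mean inequality.

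First I would record the auxiliary interpolation bound
\begin{equation*}
\|u\|_{H^{1}(\Omega)}\leq C_{0}\|u\|^{1/2}_{L^{2}(\Omega)}\|u\|^{1/2}_{D(A)}\qquad(u\in D(A)),
\end{equation*}
with $C_{0}$ depending only on $\Omega$ (the operator $A$ being regarded as fixed background data, as in Lemmas 2.1--2.2). This is immediate from integration by parts: for $u\in D(A)\subset V\subset H$, using that $P$ is self-adjoint, $Pu=u$, and $u|_{\partial\Omega}=0$,
\begin{equation*}
\|u\|^{2}_{a}=\sum^{n}_{i,j=1}(a_{ij}\partial_{x_{i}}u,\partial_{x_{j}}u)_{L^{2}(\Omega)}=(Au,u)_{L^{2}(\Omega)}\leq\|Au\|_{L^{2}(\Omega)}\|u\|_{L^{2}(\Omega)}=\|u\|_{D(A)}\|u\|_{L^{2}(\Omega)},
\end{equation*}
and the equivalence of $\|\cdot\|_{a}$ with $\|\cdot\|_{H^{1}(\Omega)}$ on $V$ (which follows from (A.3)) then gives the claim; alternatively one may simply invoke the standard Sobolev interpolation $\|u\|_{H^{1}}\lesssim\|u\|^{1/2}_{L^{2}}\|u\|^{1/2}_{H^{2}}$ together with (A.4).

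Next I would substitute the estimate (2.4) of Lemma 2.2 into this bound and use $(a+b)^{1/2}\leq a^{1/2}+b^{1/2}$ for $a,b\geq0$:
\begin{align*}
\|u\|_{H^{1}(\Omega)}&\leq C_{0}\bigl(C_{2}\eta_{N}(u)+C_{3}d^{1/2}_{N}\|u\|_{D(A)}\bigr)^{1/2}\|u\|^{1/2}_{D(A)}\\
&\leq C_{0}C_{2}^{1/2}\,\eta_{N}(u)^{1/2}\|u\|^{1/2}_{D(A)}+C_{0}C_{3}^{1/2}\,d^{1/4}_{N}\|u\|_{D(A)}.
\end{align*}
For the first term on the right I would write $\eta_{N}(u)^{1/2}\|u\|^{1/2}_{D(A)}=\bigl(d^{-1/4}_{N}\eta_{N}(u)\bigr)^{1/2}\bigl(d^{1/4}_{N}\|u\|_{D(A)}\bigr)^{1/2}$ and apply $(\alpha\beta)^{1/2}\leq\tfrac12(\alpha+\beta)$, which reproduces exactly the two quantities $d^{-1/4}_{N}\eta_{N}(u)$ and $d^{1/4}_{N}\|u\|_{D(A)}$ appearing in (2.5). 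Collecting coefficients then yields (2.5), for instance with $C_{4}=\tfrac12 C_{0}C_{2}^{1/2}$ and $C_{5}=\tfrac12 C_{0}C_{2}^{1/2}+C_{0}C_{3}^{1/2}$.

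I do not anticipate any genuine difficulty here; the argument is short and the only care needed is bookkeeping — checking that $C_{0}$ and the constants $C_{2},C_{3}$ of Lemma 2.2 depend only on $\Omega$, so that $C_{4},C_{5}$ do as well, and noting that the dimensional restriction $n=2,3$ enters only through the use of Lemma 2.2.
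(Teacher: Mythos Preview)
Your argument is correct. The interpolation step $\|u\|_{a}^{2}=(Au,u)_{L^{2}(\Omega)}\leq\|u\|_{D(A)}\|u\|_{L^{2}(\Omega)}$ is valid for $u\in D(A)$, the substitution of Lemma~2.2 is legitimate, and the AM--GM balancing with the factor $d_{N}^{\pm 1/4}$ reproduces exactly the claimed exponents.

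For comparison: the paper does not actually prove this lemma but simply cites \cite[Lemma~2.1]{Foias 2}. Your derivation is therefore more self-contained than the paper's treatment; it obtains Lemma~2.3 as a formal consequence of Lemma~2.2 together with the elementary $H^{1}$--$(L^{2},D(A))$ interpolation, which is precisely the mechanism behind the Foias--Temam result as well. One small remark on bookkeeping: the constant $C_{0}$ in your interpolation bound involves the equivalence constant $a_{3}$ between $\|\cdot\|_{a}$ and $\|\cdot\|_{H^{1}(\Omega)}$, so strictly speaking it depends on the coefficients $a_{ij}$ of $A$ and not only on $\Omega$. This is harmless here, since the paper's own statements of Lemmas~2.1--2.3 adopt the same convention of treating $A$ as fixed background data while writing ``depending only on $\Omega$''; your parenthetical comment already acknowledges this.
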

\begin{proof}
It is \cite[Lemma 2.1]{Foias 2}.
\end{proof}

\section{Proof of Theorems 2.1--2.3}
We will prove our main results which appeared in subsection 2.3.

\subsection{Proof of Theorem \rm{2.1}}
$\bar{v} \in \mathcal{S}(\bar{f})$ satisfies the following equation:
\begin{equation}
A\bar{v}+B\bar{v}=\bar{f}.
\end{equation}
We subtract (3.1) from (1.2), and obtain that
\begin{equation*}
A(\bar{u}-\bar{v})+B\bar{u}-B\bar{v}=0.
\end{equation*}
By taking the $H$-norm of this equality and (B.2), we have that
\begin{equation*}
\begin{split}
\|A(\bar{u}-\bar{v})\|_{L^{2}(\Omega)}&=\|B\bar{u}-B\bar{v}\|_{L^{2}(\Omega)} \\
&\leq C_{B}(\|\bar{u}\|^{p-1}_{H^{2}(\Omega)}+\|\bar{v}\|^{p-1}_{H^{2}(\Omega)})\|\bar{u}-\bar{v}\|_{H^{1}(\Omega)},
\end{split}
\end{equation*}
\begin{equation*}
\|\bar{u}-\bar{v}\|_{D(A)}\leq 2C_{B}M(\bar{f})^{p-1}\|\bar{u}-\bar{v}\|_{H^{1}(\Omega)}.
\end{equation*}
It follows from $\bar{u}(x_{j})-\bar{v}(x_{j})=0$ $(j=1, \cdots, N)$ that $\eta(\bar{u}-\bar{v})=0$.
This equality and (2.3) imply that
\begin{equation*}
\|\bar{u}-\bar{v}\|_{H^{1}(\Omega)}\leq C_{5}d^{1/4}_{N}\|\bar{u}-\bar{v}\|_{D(A)}.
\end{equation*}
Therefore, we obtain that
\begin{equation*}
\|\bar{u}-\bar{v}\|_{D(A)}\leq 2C_{B}C_{5}M(\bar{f})^{p-1}d^{1/4}_{N}\|\bar{u}-\bar{v}\|_{D(A)},
\end{equation*}
\begin{equation*}
(1-2C_{B}C_{5}M(\bar{f})^{p-1}d^{1/4}_{N})\|\bar{u}-\bar{v}\|_{D(A)} \leq 0.
\end{equation*}
If it is known that
\begin{equation*}
1-2C_{B}C_{5}M(\bar{f})^{p-1}d^{1/4}_{N}>0,
\end{equation*}
\begin{equation}
0<d_{N}<\frac{1}{(2C_{B}C_{5}M(\bar{f})^{p-1})^{4}},
\end{equation}
then we can conclude that $\bar{u}=\bar{v}$ in $\Omega$.
The sufficient condition for (3.2) is
\begin{equation*}
0<\delta_{1}<\frac{1}{(2C_{B}C_{5}M(\bar{f})^{p-1})^{4}}.
\end{equation*}

\subsection{Proof of Theorem \rm{2.2}}
First, we obtain an energy-type inequality.
We consider two times $t$, $s$ satisfying $t<s$, and write $s=t+\tau$ $(\tau >0)$.
Let $v(t)=u(t+\tau)$, $g(t)=f(t+\tau)$.
Then $v$ satisfies the following equation:
\begin{equation}
d_{t}v+Av+Bv=g.
\end{equation}
We subtract (3.3) from the first equation of (1.1), and have that
\begin{equation}
d_{t}(u-v)+A(u-v)+Bu-Bv=f-g.
\end{equation}
We take the $H$-scalar product of (3.4) with $A(u-v)$, and obtain from (B.2) that
\begin{equation*}
(d_{t}(u-v), A(u-v))_{L^{2}(\Omega)}+\|A(u-v)\|^{2}_{L^{2}(\Omega)}+(Bu-Bv, A(u-v))_{L^{2}(\Omega)}=(f-g, A(u-v))_{L^{2}(\Omega)},
\end{equation*}
\begin{equation*}
\begin{split}
\frac{1}{2}d_{t}(\|u-v\|^{2}_{a})+\|u-v\|^{2}_{D(A)}=&-(Bu-Bv, A(u-v))_{L^{2}(\Omega)}+(f-g, A(u-v))_{L^{2}(\Omega)} \\
\leq& |(Bu-Bv,A(u-v))_{L^{2}(\Omega)}|+|(f-g,A(u-v))_{L^{2}(\Omega)}| \\
\leq& \|Bu-Bv\|_{L^{2}(\Omega)}\|A(u-v)\|_{L^{2}(\Omega)}+\|f-g\|_{L^{2}(\Omega)}\|A(u-v)\|_{L^{2}(\Omega)} \\
\leq& C_{B}(\|u\|^{p-1}_{H^{2}(\Omega)}+\|v\|^{p-1}_{H^{2}(\Omega)})\|u-v\|_{H^{1}(\Omega)}\|u-v\|_{D(A)} \\
&+\frac{1}{4}\|u-v\|^{2}_{D(A)}+\|f-g\|^{2}_{L^{2}(\Omega)} \\
\leq& 2C_{B}M(f,u_{0},t_{0})^{p-1}\|u-v\|_{H^{1}(\Omega)}\|u-v\|_{D(A)} \\
&+\frac{1}{4}\|u-v\|^{2}_{D(A)}+\|f-g\|^{2}_{L^{2}(\Omega)}.
\end{split}
\end{equation*}
Let us notice from (2.3) that
\begin{equation*}
\|u-v\|_{H^{1}(\Omega)}\leq C_{4}d^{-1/4}_{N}\eta_{N}(u-v)+C_{5}d^{1/4}_{N}\|u-v\|_{D(A)}.
\end{equation*}
Then we obtain from the above two inequalities that
\begin{equation*}
\begin{split}
\frac{1}{2}d_{t}(\|u-v\|^{2}_{a})+\|u-v\|^{2}_{D(A)}\leq& 2C_{B}C_{4}M(f,u_{0},t_{0})^{p-1}d^{-1/4}_{N}\eta_{N}(u-v)\|u-v\|_{D(A)} \\
&+2C_{B}C_{5}M(f,u_{0},t_{0})^{p-1}d^{1/4}_{N}\|u-v\|^{2}_{D(A)} \\
&+\frac{1}{4}\|u-v\|^{2}_{D(A)}+\|f-g\|^{2}_{L^{2}(\Omega)} \\
\leq& 4C^{2}_{B}C^{2}_{4}M(f,u_{0},t_{0})^{2(p-1)}d^{-1/2}_{N}\eta_{N}(u-v)^{2} \\
&+2C_{B}C_{5}M(f,u_{0},t_{0})^{p-1}d^{1/4}_{N}\|u-v\|^{2}_{D(A)} \\
&+\frac{1}{2}\|u-v\|^{2}_{D(A)}+\|f-g\|^{2}_{L^{2}(\Omega)},
\end{split}
\end{equation*}
\begin{equation}
\begin{split}
d_{t}(\|u-v\|^{2}_{a})&+(1-4C_{B}C_{5}M(f,u_{0},t_{0})^{p-1}d^{1/4}_{N})\|u-v\|^{2}_{D(A)} \\
&\leq 8C^{2}_{B}C^{2}_{4}M(f,u_{0},t_{0})^{2(p-1)}d^{-1/2}_{N}\eta_{N}(u-v)^{2}+2\|f-g\|^{2}_{L^{2}(\Omega)}.
\end{split}
\end{equation}
We assume that
\begin{equation*}
1-4C_{B}C_{5}M(f,u_{0},t_{0})^{p-1}d^{1/4}_{N}>0,
\end{equation*}
\begin{equation}
0<d_{N}<\frac{1}{(4C_{B}C_{5}M(f,u_{0},t_{0})^{p-1})^{4}},
\end{equation}
and set
\begin{equation*}
\lambda=\frac{a^{2}_{1}}{a^{2}_{4}}(1-4C_{B}C_{5}M(f,u_{0},t_{0})^{p-1}d^{1/4}_{N}),
\end{equation*}
\begin{equation*}
h(t)=8C^{2}_{B}C^{2}_{4}M(f,u_{0},t_{0})^{2(p-1)}d^{-1/2}_{N}\eta_{N}(u-v)^{2}+2\|f-g\|^{2}_{L^{2}(\Omega)}.
\end{equation*}
Since $\lambda>0$ from the definition of $\lambda$, (3.5) becomes
\begin{equation}
d_{t}(\|(u-v)(t)\|^{2}_{a})+\lambda\|(u-v)(t)\|^{2}_{a}\leq h(t)
\end{equation}
for any $t\geq t_{0}$.

We show by an energy-type inequality that $\{u(t)\}_{t \geq t_{0}}$ is a Cauchy sequence in $V$.
It follows from $f(t)\rightarrow f_{\infty}$ in $H$ as $t\rightarrow \infty$ and $u(x_{j},t)\rightarrow \xi_{j}$ as $t\rightarrow \infty$ $(j=1, \cdots, N)$ that $h\rightarrow 0$ as $t\rightarrow \infty$.
Hence, there exists a positive constant $t(\varepsilon)$ for any positive constant $\varepsilon$ such that $|h(t)|\leq\varepsilon$ for any $t\geq t(\varepsilon)$.
It is derived from (3.7) that we have the following inequality:
\begin{equation}
d_{t}(\|(u-v)(t)\|^{2}_{a})+\lambda\|(u-v)(t)\|^{2}_{a}\leq\varepsilon
\end{equation}
for any $t\geq t(\varepsilon)$.
The Gronwall lemma and (3.8) imply that
\begin{equation*}
\|(u-v)(t)\|^{2}_{a}\leq \|(u-v)(t(\varepsilon))\|^{2}_{a}e^{-\lambda(t-t(\varepsilon))}+\frac{\varepsilon}{\lambda}(1-e^{-\lambda(t-t(\varepsilon))}),
\end{equation*}
\begin{equation}
\|u(t)-u(s)\|^{2}_{a}\leq \|(u-v)(t(\varepsilon))\|^{2}_{a}e^{-\lambda(t-t(\varepsilon))}+\frac{\varepsilon}{\lambda}(1-e^{-\lambda(t-t(\varepsilon))})
\end{equation}
for any $t\geq t(\varepsilon)$.
We take $t$, $s$ to infinity in (3.9), and obtain that
\begin{equation*}
\limsup_{t, s\rightarrow \infty}\|u(t)-u(s)\|^{2}_{a}\leq\frac{\varepsilon}{\lambda}.
\end{equation*}
Since $\varepsilon$ is an arbitrary positive constant, we conclude that $u(t)-v(t)\rightarrow 0$ in $V$ as $t, s\rightarrow \infty$, that is, $\{u(t)\}_{t\geq t_{0}}$ is a Cauchy sequence in $V$.
The completeness of $V$ implies that there exists $u_{\infty} \in V$ satisfying
\begin{equation}
u(t)\rightarrow u_{\infty} \ \mathrm{in} \ V \ \mathrm{as} \ t\rightarrow \infty.
\end{equation}

Finally, we prove that $u_{\infty} \in \mathcal{S}(f_{\infty})$ and $u_{\infty}(x_{j})=\xi_{j}$ $(j=1, \cdots, N)$.
$\{u(t)\}_{t\geq t_{0}}$ is bounded in $D(A)$ because of (H.4).
$D(A)$ is compactly embedded in $C^{0,\mu}(\overline{\Omega})$ for any $0<\mu<1/2$ from the Rellich-Kondrachov theorem.
Hence, we conclude from (3.10) that
\begin{equation}
u(t)\rightarrow u_{\infty} \ \mathrm{in} \ C^{0,\mu}(\overline{\Omega}) \ as \ t\rightarrow \infty.
\end{equation}
$u(x_{j},t)\rightarrow \xi_{j}$ as $t\rightarrow \infty$ $(j=1,\cdots, N)$ and (3.11) imply that $u_{\infty}(x_{j})=\xi_{j}$ $(j=1, \cdots, N)$.
By taking $t$ to infinity in $(1.1)_{1}$, the straightforward argument shows that $u_{\infty} \in \mathcal{S}(f_{\infty})$.
Let us choose $\delta_{2}\leq\delta_{1}(M(f_{\infty}))$.
Then (1.2) has uniquely a strong solution $u_{\infty} \in \mathcal{S}(f_{\infty})$ satisfying $u_{\infty}(x_{j})=\xi_{j}$ $(j=1, \cdots, N)$ from Theorem 2.1.
Therefore, the sufficient condition for (3.6) and desired properties of $u_{\infty}$ is
\begin{equation*}
0<\delta_{2}<\min\left\{\delta_{1}(M(f_{\infty})), \frac{1}{(4C_{B}C_{5}M(f,u_{0},t_{0})^{p-1})^{4}}\right\}.
\end{equation*}

\subsection{Proof of Theorem \rm{2.3}}
Similarly to the proof of Theorem 2.2, we obtain an energy-type inequality.
$v$ satisfies the following equation:
\begin{equation}
d_{t}v+Av+Bv=g.
\end{equation}
We subtract (3.12) from the first equation of (1.1), and have that
\begin{equation}
d_{t}(u-v)+A(u-v)+Bu-Bv=f-g.
\end{equation}
By taking the $H$-scalar product of (3.13) with $A(u-v)$, we obtain from (B.2) that
\begin{equation*}
(d_{t}(u-v), A(u-v))_{L^{2}(\Omega)}+\|A(u-v)\|^{2}_{L^{2}(\Omega)}+(Bu-Bv, A(u-v))_{L^{2}(\Omega)}=(f-g, A(u-v))_{L^{2}(\Omega)},
\end{equation*}
\begin{equation*}
\begin{split}
\frac{1}{2}d_{t}(\|u-v\|^{2}_{a})+\|u-v\|^{2}_{D(A)}=&-(Bu-Bv, A(u-v))_{L^{2}(\Omega)}+(f-g, A(u-v))_{L^{2}(\Omega)} \\
\leq& |(Bu-Bv,A(u-v))_{L^{2}(\Omega)}|+|(f-g, A(u-v))_{L^{2}(\Omega)}| \\
\leq& \|Bu-Bv\|_{L^{2}(\Omega)}\|A(u-v)\|_{L^{2}(\Omega)}+\|f-g\|_{L^{2}(\Omega)}\|A(u-v)\|_{L^{2}(\Omega)} \\
\leq& C_{B}(\|u\|^{p-1}_{H^{2}(\Omega)}+\|v\|^{p-1}_{H^{2}(\Omega)})\|u-v\|_{H^{1}(\Omega)}\|u-v\|_{D(A)} \\
&+\frac{1}{4}\|u-v\|^{2}_{D(A)}+\|f-g\|^{2}_{L^{2}(\Omega)} \\
\leq& C_{B}(M(f,u_{0},t_{0})^{p-1}+M(g,v_{0},t_{0})^{p-1})\|u-v\|_{H^{1}(\Omega)}\|u-v\|_{D(A)} \\
&+\frac{1}{4}\|u-v\|^{2}_{D(A)}+\|f-g\|^{2}_{L^{2}(\Omega)}.
\end{split}
\end{equation*}
It follows from (2.3) that
\begin{equation*}
\|u-v\|_{H^{1}(\Omega)}\leq C_{4}d^{-1/4}_{N}\eta_{N}(u-v)+C_{5}d^{1/4}_{N}\|u-v\|_{D(A)}.
\end{equation*}
Let $M(p,t_{0})=M(f,u_{0},t_{0})^{p-1}+M(g,v_{0},t_{0})^{p-1}$.
Then it is derived from the above two inequalities that
\begin{equation*}
\begin{split}
\frac{1}{2}d_{t}(\|u-v\|^{2}_{a})+\|u-v\|^{2}_{D(A)}\leq& C_{B}C_{4}M(p,t_{0})d^{-1/4}_{N}\eta_{N}(u-v)\|u-v\|_{D(A)} \\
&+C_{B}C_{5}M(p,t_{0})d^{1/4}_{N}\|u-v\|^{2}_{D(A)} \\
&+\frac{1}{4}\|u-v\|^{2}_{D(A)}+\|f-g\|^{2}_{L^{2}(\Omega)} \\
\leq& C^{2}_{B}C^{2}_{4}M(p,t_{0})^{2}d^{-1/2}_{N}\eta_{N}(u-v)^{2} \\
&+C_{B}C_{5}M(p,t_{0})d^{1/4}_{N}\|u-v\|^{2}_{D(A)} \\
&+\frac{1}{2}\|u-v\|^{2}_{D(A)}+\|f-g\|^{2}_{L^{2}(\Omega)},
\end{split}
\end{equation*}
\begin{equation}
\begin{split}
d_{t}(\|u-v\|^{2}_{a})&+(1-2C_{B}C_{5}M(p,t_{0})d^{1/4}_{N})\|u-v\|^{2}_{D(A)} \\
&\leq 2C^{2}_{B}C^{2}_{4}M(p,t_{0})^{2}d^{-1/2}_{N}\eta_{N}(u-v)^{2}+2\|f-g\|^{2}_{L^{2}(\Omega)}.
\end{split}
\end{equation}
We assume that
\begin{equation*}
1-2C_{B}C_{5}M(p,t_{0})d^{1/4}_{N}>0,
\end{equation*}
\begin{equation}
0<d_{N}<\frac{1}{(2C_{B}C_{5}M(p,t_{0}))^{4}},
\end{equation}
and set
\begin{equation*}
\lambda=\frac{a^{2}_{1}}{a^{2}_{4}}(1-2C_{B}C_{5}M(p,t_{0})d^{1/4}_{N}),
\end{equation*}
\begin{equation*}
h(t)=2C^{2}_{B}C^{2}_{4}M(p,t_{0})^{2}d^{-1/2}_{N}\eta_{N}((u-v)(t))^{2}+\|(f-g)(t)\|^{2}_{L^{2}(\Omega)}.
\end{equation*}
Then $\lambda>0$, we obtain from (3.14) that
\begin{equation}
d_{t}(\|(u-v)(t)\|^{2}_{a})+\lambda\|(u-v)(t)\|^{2}_{a}\leq h(t)
\end{equation}
for any $t\geq t_{0}$.

We show by an energy-type inequality that $u(t)-v(t)\rightarrow 0$ in $V$ as $t\rightarrow \infty$.
It follows from $f(t)-g(t)\rightarrow 0$ in $H$ as $t\rightarrow \infty$ and $u(x_{j},t)-v(x_{j},t)\rightarrow 0$ as $t\rightarrow \infty$ $(j=1,\cdots, N)$ that $h\rightarrow 0$ as $t\rightarrow \infty$.
Hence, there exists a positive constant $t(\varepsilon)$ for any positive constant $\varepsilon$ such that $|h(t)|\leq\varepsilon$ for any $t\geq t(\varepsilon)$.
It is derived from (3.16) that we have the following inequality:
\begin{equation}
d_{t}(\|(u-v)(t)\|^{2}_{a})+\lambda\|(u-v)(t)\|^{2}_{a}\leq\varepsilon
\end{equation}
for any $t \geq t(\varepsilon)$.
The Gronwall lemma and (3.17) imply that
\begin{equation}
\|(u-v)(t)\|^{2}_{a}\leq\|(u-v)(t(\varepsilon))\|^{2}_{a}e^{-\lambda(t-t(\varepsilon))}+\frac{\varepsilon}{\lambda}(1-e^{-\lambda(t-t(\varepsilon))})
\end{equation}
for any $t \geq t(\varepsilon)$.
We take $t$ to infinity in (3.18), and obtain that
\begin{equation*}
\limsup_{t\rightarrow \infty}\|(u-v)(t)\|^{2}_{a}\leq\frac{\varepsilon}{\lambda}.
\end{equation*}
Since $\varepsilon$ is an arbitrary positive constant, we conclude that
\begin{equation}
u(t)-v(t)\rightarrow 0 \ \mathrm{in} \ V \ as \ t\rightarrow \infty.
\end{equation}

Finally, we prove that $u(t)-v(t)\rightarrow 0$ in $C^{0,\mu}(\overline{\Omega})$ as $t\rightarrow \infty$ for any $0<\mu<1/2$.
$\{(u-v)(t)\}_{t\geq t_{0}}$ is bounded in $D(A)$ because of (H.4).
$D(A)$ is compactly embedded in $C^{0,\mu}(\overline{\Omega})$ from the Rellich-Kondrachov theorem.
Hence, we conclude from (3.19) that
\begin{equation*}
u(t)-v(t)\rightarrow 0 \ \mathrm{in} \ C^{0,\mu}(\overline{\Omega}) \ as \ t\rightarrow \infty.
\end{equation*}
The sufficient condition for (3.15) is
\begin{equation*}
0<\delta_{3}<\frac{1}{(2C_{B}C_{5}M(p,t_{0}))^{4}}.
\end{equation*}

\section{Applications}
We will apply our main results to the semilinear heat equation and the Navier-Stokes equations in subsections 4.2 and 4.3 respectively after some preliminaries in subsection 4.1.

\subsection{Sectorial operators in $L^{2}$ and analytic semigroups on $L^{2}$}
The theory of analytic semigroups on $L^{2}(\Omega)$ and fractional powers of sectorial operators are introduced as follows: Let $(X,\|\cdot\|_{X})$ and $(Y,\|\cdot\|_{Y})$ be Banach spaces.
$\mathcal{B}(X;Y)$ is the Banach space of all linear operators from $X$ into $Y$ which are bounded in $X$, $\mathcal{B}(X)=\mathcal{B}(X;X)$.
The norm in $\mathcal{B}(X;Y)$ is denoted by $\|\cdot\|_{\mathcal{B}(X;Y)}$, that is,
\begin{equation*}
\|A\|_{\mathcal{B}(X;Y)}=\sup_{x \in X\setminus\{0\}}\frac{\|Ax\|_{Y}}{\|x\|_{X}}.
\end{equation*}
Let $A$ be a sectorial operator in $L^{2}(\Omega)$ defined as in \cite[Definition 1.3.1]{Henry}, $D(A)\subset H^{2}(\Omega)$, $\mathrm{Re}\sigma(A)>0$, where $\mathrm{Re}\sigma(A)>0$ means that $\mathrm{Re}\lambda>0$ for any $\lambda \in \sigma(A)$.
It is well known in \cite[Theorem 1.3.4 and Definition 1.4.1]{Henry}, \cite[Theorem 2.5.2 and Definition 2.6.7]{Pazy} that $-A$ generates an uniformly bounded analytic semigroup $\{e^{-tA}\}_{t\geq 0}$ on $L^{2}(\Omega)$, fractional powers $A^{\alpha}$ of $A$ can be defined for any $\alpha\geq 0$, $A^{0}=I$.
Let us introduce the Hilbert space $D(A^{\alpha})$ with the scalar product $(u,v)_{D(A^{\alpha})}=(A^{\alpha}u,A^{\alpha}v)_{L^{2}(\Omega)}$ and the norm $\|u\|_{D(A^{\alpha})}=((u,u)_{D(A^{\alpha})})^{1/2}$ for any $0\leq\alpha\leq 1$.

We state some lemmas concerning sectorial operators in $L^{2}(\Omega)$.
See, for example, \cite[Chapter 1]{Henry}, \cite[Chapter 2]{Pazy} on the theory of analytic semigroups on Banach spaces and fractional powers of sectorial operators.
\begin{lemma}
Let $\alpha\geq 0$, $0<\lambda<\lambda_{1}$, $\lambda_{1}=\min\{\lambda>0 \ ; \ \lambda \in \mathrm{Re}\sigma(A)\}$.
Then there exists a positive constant $C_{\alpha,\lambda}$ depending only on $n$, $\Omega$, $A$, $\alpha$ and $\lambda$ such that
\begin{equation*}
\|A^{\alpha}e^{-tA}u\|_{\mathcal{B}(L^{2}(\Omega))}\leq C_{\alpha,\lambda}t^{-\alpha}e^{-\lambda t}.
\end{equation*}
\end{lemma}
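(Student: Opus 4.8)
The plan is to obtain this from the Dunford contour representation of the analytic semigroup $\{e^{-tA}\}_{t\ge0}$ generated by $-A$; the inequality is in fact the classical smoothing‑and‑decay estimate of Henry \cite[Theorem 1.4.3]{Henry} (see also \cite[Chapter 2]{Pazy}) applied with decay exponent $\delta=\lambda$, which is permitted because $\lambda<\lambda_{1}$ forces $\mathrm{Re}\,\mu>\lambda$ for every $\mu\in\sigma(A)$. I would reduce the proof to two standard facts about $A$: (i) (exponential stability) since $\sigma(A)\subset\{\mathrm{Re}\,\mu\ge\lambda_{1}\}$, for each $\lambda'$ with $\lambda<\lambda'<\lambda_{1}$ there is $M_{\lambda'}>0$ with $\|e^{-tA}\|_{\mathcal{B}(L^{2}(\Omega))}\le M_{\lambda'}e^{-\lambda' t}$ for all $t\ge0$; and (ii) (analytic smoothing) since $0\in\rho(A)$, $A^{\alpha}$ is closed with bounded inverse and, for each $\alpha>0$, $\|A^{\alpha}e^{-tA}\|_{\mathcal{B}(L^{2}(\Omega))}\le K_{\alpha}t^{-\alpha}$ for all $t>0$. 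The case $\alpha=0$ of the lemma is exactly (i), so I would assume $\alpha>0$.

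For both facts I would use the representation $e^{-tA}=\frac{1}{2\pi i}\int_{\Gamma}e^{t\mu}(\mu I+A)^{-1}\,d\mu$ ($t>0$), with $\Gamma$ a sectorial broken line through a point $-\lambda'\in(-\lambda_{1},0)$ consisting of two rays at angles $\pm\theta$, $\pi/2<\theta<\pi$, chosen by sectoriality so that $\Gamma\subset\rho(-A)$ and $\sigma(-A)$ lies strictly to its right; along $\Gamma$ one has $\mathrm{Re}\,\mu\le-\lambda'$, hence $|e^{t\mu}|\le e^{-\lambda' t}$, and $\|(\mu I+A)^{-1}\|_{\mathcal{B}(L^{2}(\Omega))}\le C_{0}(1+|\mu|)^{-1}$ from the sectorial bound at infinity together with continuity near the vertex (which lies in $\rho(-A)$ because $\lambda'<\lambda_{1}$). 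Integrating the resolvent bound against $|e^{t\mu}|\le e^{-\lambda' t}$ gives (i). For (ii) I would apply the functional calculus $A^{\alpha}e^{-tA}=\frac{1}{2\pi i}\int_{\Gamma}(-\mu)^{\alpha}e^{t\mu}(\mu I+A)^{-1}\,d\mu$ with $\lambda'$ replaced by $\lambda$ — the principal branch of $(-\mu)^{\alpha}$ is analytic along $\Gamma$ since $-\mu$ stays in the open right half plane there — parametrize $\mu=-\lambda+s e^{\pm i\theta}$ and substitute $r=st$, reducing the estimate to the convergence of $\int_{0}^{\infty}r^{\alpha-1}e^{-cr}\,dr<\infty$ for $\alpha>0$.

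It then remains to patch the two regimes. For $0<t\le1$, (ii) and $e^{-\lambda t}\ge e^{-\lambda}$ give $\|A^{\alpha}e^{-tA}\|_{\mathcal{B}(L^{2}(\Omega))}\le K_{\alpha}e^{\lambda}\,t^{-\alpha}e^{-\lambda t}$. For $t\ge1$, write $A^{\alpha}e^{-tA}=e^{-(t-1)A}\bigl(A^{\alpha}e^{-A}\bigr)$ and use (i) with a fixed $\lambda'\in(\lambda,\lambda_{1})$ and $A^{\alpha}e^{-A}\in\mathcal{B}(L^{2}(\Omega))$ to get $\|A^{\alpha}e^{-tA}\|_{\mathcal{B}(L^{2}(\Omega))}\le M_{\lambda'}e^{\lambda'}K_{\alpha}\,e^{-\lambda' t}$, and then $e^{-\lambda' t}=t^{-\alpha}e^{-\lambda t}\cdot\bigl(t^{\alpha}e^{-(\lambda'-\lambda)t}\bigr)\le\bigl(\sup_{s\ge1}s^{\alpha}e^{-(\lambda'-\lambda)s}\bigr)\,t^{-\alpha}e^{-\lambda t}$, the supremum being finite. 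Taking $C_{\alpha,\lambda}$ to be the larger of the two constants yields the claim, and tracing the construction shows $C_{\alpha,\lambda}$ depends only on $n$, $\Omega$, $A$, $\alpha$ and $\lambda$ (through $\lambda_{1}$, the sectoriality data, $K_{\alpha}$ and the choice of $\lambda'$). The only step needing real care is the contour estimate for (ii) — choosing the angle $\theta$ and the vertex so that $\Gamma$ avoids $\sigma(-A)$ while keeping $|e^{t\mu}|\le e^{-\lambda t}$, and carrying out the rescaling $r=st$ that turns the $\mu$‑integral into the convergent Gamma‑type integral. The reconciliation of the polynomial smoothing bound with the exponential decay is routine once one spends a little of the decay rate ($\lambda'>\lambda$); everything else is bookkeeping, and for this paper one could equally well just invoke \cite[Theorem 1.4.3]{Henry} and \cite[Chapter 2]{Pazy} directly.
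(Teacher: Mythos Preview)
Your proposal is correct; the paper's own proof is simply a one-line citation of \cite[Theorem 1.4.3]{Henry}, which you both invoke explicitly and sketch in detail via the standard Dunford contour argument. So your approach is essentially the same as the paper's, just with the underlying proof of the cited theorem written out.
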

\begin{proof}
It is \cite[Theorem 1.4.3]{Henry}.
\end{proof}
\begin{lemma}
Let $0\leq\alpha\leq 1$.
Then $D(A^{\alpha})$ is continuously embedded in $L^{q}(\Omega)$ if $1/2-2\alpha/n\leq 1/q\leq 1/2$.
\end{lemma}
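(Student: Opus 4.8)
The plan is to sandwich $D(A^\alpha)$ inside a fractional Sobolev space and then apply the Sobolev embedding theorem, the bridge being either the interpolation characterization of fractional powers or the semigroup bounds of Lemma 4.1, combined with the standing hypothesis $D(A)\subset H^2(\Omega)$. The case $\alpha=0$ is immediate: the constraint $1/2-2\alpha/n\le 1/q\le 1/2$ then forces $q=2$, and $D(A^0)=L^2(\Omega)$. So assume $0<\alpha\le 1$.

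First I would reduce the claim to the continuous inclusion $D(A^\alpha)\hookrightarrow H^{2\alpha}(\Omega)$. Since $A$ is closed and $D(A)\subset H^2(\Omega)$, the inclusion $D(A)\hookrightarrow H^2(\Omega)$ is continuous by the closed graph theorem (the graph norm on $D(A)$ being equivalent to $\|A\cdot\|_{L^2(\Omega)}$, as $0\notin\sigma(A)$). Because $A$ is sectorial with $\mathrm{Re}\,\sigma(A)>0$, its fractional power space satisfies $D(A^\alpha)=[L^2(\Omega),D(A)]_\alpha$ with equivalent norms; composing with the complex-interpolation identity $[L^2(\Omega),H^2(\Omega)]_\alpha=H^{2\alpha}(\Omega)$ on the bounded Lipschitz domain $\Omega$ yields $D(A^\alpha)\hookrightarrow H^{2\alpha}(\Omega)$. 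Equivalently, one may invoke an embedding theorem for fractional powers of sectorial operators directly (e.g.\ \cite{Henry}); the more hands-on route writes $A^{-\alpha}=\Gamma(\alpha)^{-1}\int_0^\infty t^{\alpha-1}e^{-tA}\,dt$ and uses a smoothing estimate $\|e^{-tA}u\|_{L^q(\Omega)}\le Ct^{-\frac n2(\frac12-\frac1q)}e^{-\lambda t}\|u\|_{L^2(\Omega)}$ extracted from Lemma 4.1 and Sobolev embedding, but the resulting integral $\int_0^\infty t^{\alpha-1-\frac n2(\frac12-\frac1q)}e^{-\lambda t}\,dt$ converges only on the open range $1/q>1/2-2\alpha/n$.

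It then remains to apply the Sobolev embedding $H^{2\alpha}(\Omega)\hookrightarrow L^q(\Omega)$: for $0\le 2\alpha<n/2$ this holds precisely when $1/q\ge 1/2-2\alpha/n$, the equality case being the sharp Sobolev inequality, and for $2\alpha\ge n/2$ it holds for every finite $q$; together these cover exactly the range $1/2-2\alpha/n\le 1/q\le 1/2$ (with $q$ finite). I expect the endpoint $1/q=1/2-2\alpha/n$ to be the only delicate point: the naive semigroup integral diverges there, and the crude bound obtained by interpolating merely between $L^2(\Omega)$ and $L^\infty(\Omega)$ is too weak for $n=2,3$, so reaching it genuinely requires the Hilbert-scale interpolation description of $D(A^\alpha)$ (hence $D(A^\alpha)\hookrightarrow H^{2\alpha}(\Omega)$, not just into a coarser Besov space) together with the sharp Sobolev inclusion. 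Everything else is routine.
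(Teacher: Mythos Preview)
The paper does not actually prove this lemma: its entire proof is the single sentence ``It is \cite[Theorem 1.6.1]{Henry}.'' Your parenthetical remark that one may simply invoke an embedding theorem for fractional powers from \cite{Henry} is therefore exactly what the paper does, and nothing more is required here.

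Your longer route---$D(A^\alpha)=[L^2(\Omega),D(A)]_\alpha\hookrightarrow [L^2(\Omega),H^2(\Omega)]_\alpha=H^{2\alpha}(\Omega)\hookrightarrow L^q(\Omega)$---is a correct and transparent derivation, and is essentially how such results are established in \cite{Henry}. One caveat worth flagging: the identification $D(A^\alpha)=[L^2(\Omega),D(A)]_\alpha$ with \emph{complex} interpolation is not automatic for an arbitrary sectorial operator; it holds, for instance, when $A$ is positive self-adjoint or has bounded imaginary powers. In the concrete applications of Sections 4.2 and 4.3 the operator $A$ is self-adjoint (by (A.2) and the Dirichlet boundary condition), so the identity is available, but in the generality of Section 4.1 you should either add this hypothesis or fall back on the continuous inclusion $D(A^\alpha)\hookrightarrow (L^2(\Omega),D(A))_{\alpha,1}$, which holds for any sectorial $A$ with $0\in\rho(A)$ and still feeds into $H^{2\alpha}(\Omega)$ via reiteration. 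Your diagnosis of the endpoint $1/q=1/2-2\alpha/n$ is accurate: the semigroup-integral computation only yields the open range, and reaching the critical exponent genuinely requires the sharp Sobolev inequality on $H^{2\alpha}(\Omega)$.
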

\begin{proof}
It is \cite[Theorem 1.6.1]{Henry}.
\end{proof}

\subsection{Semilinear heat equation}
The initial-boundary value problem for the semilinear heat equation is described as follows:
\begin{equation}
\begin{split}
&\partial_{t}u-k\Delta u-|u|^{p-1}u=f & \mathrm{in} \ \Omega\times(0,\infty), \\
&u|_{t=0}=u_{0} & \mathrm{in} \ \Omega, \\
&u|_{\partial\Omega}=0 & \mathrm{on} \ \partial\Omega\times(0,\infty),
\end{split}
\end{equation}
where $k>0$, $p>1$, $f$ is an external force, $u_{0}$ is an initial data of $u$.

Let $H=L^{2}(\Omega)$, $V=H^{1}_{0}(\Omega)$, $P=I_{2}$, where $I_{2}$ is the identity operator in $L^{2}(\Omega)$.
Then we can utilize the strong formulation to rewrite (4.1) by
\begin{equation}
\begin{split}
&d_{t}u+Au+b(u)=f & \mathrm{in} \ L^{2}((0,\infty);L^{2}(\Omega)), \\
&u(0)=u_{0} & \mathrm{in} \ H^{1}_{0}(\Omega),
\end{split}
\end{equation}
where $Au=-k\Delta u$, $b(u)=-|u|^{p-1}u$.
It follows from \cite[Theorem 8.12]{Gilbarg} that $A$ satisfies (A.1)--(A.4).
Moreover, $b$ satisfies the following properties:
\begin{itemize}
\item[(b.1)]$b(0)=0$.
\item[(b.2)]There exists a positive constant $C_{b}$ depending only on $p$ such that
\begin{equation*}
|b(u)-b(v)|\leq C_{b}(|u|^{p-1}+|v|^{p-1})|u-v|
\end{equation*}
for any $u,v \in \mathbb{R}$.
\end{itemize}
It is assured by the following lemma that $Bu=b(u)$ satisfies $\mathrm{(B.1)}$, $\mathrm{(B.2)}$.
\begin{lemma}
Let $n=2, 3$, $1<p\leq n/(n-2)$.
Then there exists a positive constant $C_{B}$ depending only on $\Omega$ and $p$ such that
\begin{equation}
\|b(u)-b(v)\|_{L^{2}(\Omega)}\leq C_{B}(\|u\|^{p-1}_{H^{1}(\Omega)}+\|v\|^{p-1}_{H^{1}(\Omega)})\|u-v\|_{H^{1}(\Omega)}
\end{equation}
for any $u,v \in H^{1}(\Omega)$.
\end{lemma}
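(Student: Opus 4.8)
The plan is to obtain the asserted inequality from the pointwise bound (b.2) by integration over $\Omega$, H\"{o}lder's inequality, and the Sobolev embedding theorem. The role of the hypothesis $1<p\le n/(n-2)$ is precisely to make the Lebesgue exponents that appear after H\"{o}lder's inequality admissible for the embedding $H^{1}(\Omega)\hookrightarrow L^{q}(\Omega)$ on the bounded domain $\Omega$.

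First I would use (b.2) together with the triangle inequality to get the pointwise estimate
\[
|b(u)-b(v)|\le C_{b}\bigl(|u|^{p-1}+|v|^{p-1}\bigr)|u-v| \quad\text{on }\Omega,
\]
and then, taking the $L^{2}(\Omega)$-norm,
\[
\|b(u)-b(v)\|_{L^{2}(\Omega)}\le C_{b}\bigl\|\,|u|^{p-1}|u-v|\,\bigr\|_{L^{2}(\Omega)}+C_{b}\bigl\|\,|v|^{p-1}|u-v|\,\bigr\|_{L^{2}(\Omega)}.
\]
It therefore suffices to estimate $\bigl\|\,|w|^{p-1}|u-v|\,\bigr\|_{L^{2}(\Omega)}$ for $w\in\{u,v\}$ by $\|w\|_{H^{1}(\Omega)}^{p-1}\|u-v\|_{H^{1}(\Omega)}$ up to a constant depending only on $\Omega$ and $p$.

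Next I would apply H\"{o}lder's inequality: for $n=3$, writing $\int_{\Omega}|w|^{2(p-1)}|u-v|^{2}$ and using the conjugate exponents $n/2$ and $n/(n-2)$ gives
\[
\bigl\|\,|w|^{p-1}|u-v|\,\bigr\|_{L^{2}(\Omega)}\le \|w\|_{L^{n(p-1)}(\Omega)}^{\,p-1}\,\|u-v\|_{L^{2n/(n-2)}(\Omega)};
\]
for $n=2$ one uses instead any pair of finite conjugate exponents, which is harmless since $\Omega$ is bounded. The hypothesis $p\le n/(n-2)$ is equivalent to $n(p-1)\le 2n/(n-2)$, so on the bounded domain $\Omega$ one has the continuous inclusion $L^{2n/(n-2)}(\Omega)\hookrightarrow L^{n(p-1)}(\Omega)$, and combined with the Sobolev embedding $H^{1}(\Omega)\hookrightarrow L^{2n/(n-2)}(\Omega)$ (and $H^{1}(\Omega)\hookrightarrow L^{q}(\Omega)$ for all $q<\infty$ when $n=2$) this yields $\|w\|_{L^{n(p-1)}(\Omega)}\le C\|w\|_{H^{1}(\Omega)}$ and $\|u-v\|_{L^{2n/(n-2)}(\Omega)}\le C\|u-v\|_{H^{1}(\Omega)}$. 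Substituting and adding the two terms gives the claim with $C_{B}$ assembled from $C_{b}$ and the embedding constants, hence depending only on $\Omega$ and $p$.

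The only step that requires genuine care is the choice of H\"{o}lder exponents: one must verify that the split of $\tfrac12$ can be arranged so that both Lebesgue exponents produced, namely $n(p-1)$ and $2n/(n-2)$, are dominated by the $H^{1}$-norm, and this is exactly where $p\le n/(n-2)$ enters, with $p=n/(n-2)$ being the borderline case. The two-dimensional case is easier and only needs the separate remark that $H^{1}(\Omega)$ embeds continuously in every $L^{q}(\Omega)$ with $q<\infty$, so that no upper restriction on $p$ is forced there.
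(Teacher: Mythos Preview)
Your proof is correct and follows essentially the same strategy as the paper: the pointwise bound (b.2), then H\"older's inequality, then the Sobolev embedding $H^{1}(\Omega)\hookrightarrow L^{q}(\Omega)$. The only difference is the choice of H\"older exponents: the paper splits with conjugate exponents $p$ and $p/(p-1)$ so that every factor lands in $L^{2p}(\Omega)$, obtaining the symmetric intermediate estimate
\[
\|b(u)-b(v)\|_{L^{2}(\Omega)}\le C_{b}\bigl(\|u\|_{L^{2p}(\Omega)}^{p-1}+\|v\|_{L^{2p}(\Omega)}^{p-1}\bigr)\|u-v\|_{L^{2p}(\Omega)},
\]
and then invokes $H^{1}(\Omega)\hookrightarrow L^{2p}(\Omega)$, which holds exactly when $2p\le 2n/(n-2)$, i.e.\ $p\le n/(n-2)$. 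Your split with $n/2$ and $n/(n-2)$ (for $n=3$) reaches the same conclusion; the paper's choice is slightly tidier in that it avoids the awkward exponent $n(p-1)$, which can drop below $1$ for $p$ near $1$, but both arguments are equivalent.
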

\begin{proof}
By taking the $L^{2}$-norm of (b.2), it follows from the H\"{o}lder inequality and the Minkowski inequality that
\begin{equation*}
\begin{split}
\|b(u)-b(v)\|_{L^{2}(\Omega)}&\leq C_{b}\|(|u|^{p-1}+|v|^{p-1})|u-v|\|_{L^{2}(\Omega)} \\
&\leq C_{b}\||u|^{p-1}+|v|^{p-1}\|_{L^{2p/(p-1)}(\Omega)}\|u-v\|_{L^{2p}(\Omega)} \\
&\leq C_{b}(\||u|^{p-1}\|_{L^{2p/(p-1)}(\Omega)}+\||v|^{p-1}\|_{L^{2p/(p-1)}(\Omega)})\|u-v\|_{L^{2p}(\Omega)} \\
&=C_{b}(\|u\|^{p-1}_{L^{2p}(\Omega)}+\|v\|^{p-1}_{L^{2p}(\Omega)})\|u-v\|_{L^{2p}(\Omega)},
\end{split}
\end{equation*}
\begin{equation}
\|b(u)-b(v)\|_{L^{2}(\Omega)}\leq C_{b}(\|u\|^{p-1}_{L^{2p}(\Omega)}+\|v\|^{p-1}_{L^{2p}(\Omega)})\|u-v\|_{L^{2p}(\Omega)}
\end{equation}
for any $u,v \in H^{1}(\Omega)$.
$H^{1}(\Omega)$ is continuously embedded in $L^{2p}(\Omega)$ from the Sobolev embedding theorem.
Hence, it is clear that (4.3) is established by (4.4).
\end{proof}
It can be easily seen from the following theorems that $\mathrm{(H.3)}$, $\mathrm{(H.4)}$ hold for (4.2) under appropriate assumptions for $p$, $f$ and $u_{0}$.
\begin{theorem}
Let $n=2, 3$, $1<p\leq n/(n-2)$, $f \in L^{2}((0,\infty);L^{2}(\Omega))$, $u_{0} \in H^{1}_{0}(\Omega)$.
Then there exists a (small) positive constant $\varepsilon_{1}$ and $\varepsilon_{2}$ depending only on $\Omega$, $k$ and $p$ such that $(4.2)$ has uniquely a strong solution provided that $\|f\|_{L^{\infty}((0,\infty);L^{2}(\Omega))}\leq\varepsilon_{1}$, $\|u_{0}\|_{H^{1}(\Omega)}\leq\varepsilon_{2}$.
\end{theorem}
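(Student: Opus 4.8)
The plan is to construct the solution as a fixed point of the Duhamel map on $[0,\infty)$ and then to upgrade it to a strong solution in the sense of Definition~2.1 by an energy estimate (a Galerkin scheme with the same a priori bounds would serve just as well). Since $A=-k\Delta$ is sectorial in $L^{2}(\Omega)$ with $\mathrm{Re}\,\sigma(A)>0$ (so the constant $\lambda_{1}$ of Lemma~4.1 is positive), $-A$ generates the analytic semigroup $\{e^{-tA}\}_{t\geq0}$ obeying the bound of Lemma~4.1, and $D(A^{1/2})=V=H^{1}_{0}(\Omega)$ with $\|A^{1/2}u\|_{L^{2}(\Omega)}=\|u\|_{a}$, hence $\|\cdot\|_{D(A^{1/2})}$ is equivalent to $\|\cdot\|_{H^{1}(\Omega)}$. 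A strong solution of (4.2) must satisfy
\[
u(t)=e^{-tA}u_{0}+\int_{0}^{t}e^{-(t-s)A}\bigl(f(s)-b(u(s))\bigr)\,ds=:\Phi(u)(t),
\]
so the first task is to solve this equation.

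First I would fix $\lambda\in(0,\lambda_{1})$, set $\|u\|_{*}:=\sup_{t\geq0}\|u(t)\|_{H^{1}(\Omega)}$, and run a contraction argument for $\Phi$ on the closed ball $B_{R}=\{u\in C_{b}([0,\infty);V)\,;\,\|u\|_{*}\leq R\}$. Applying $A^{1/2}$ under the integral, Lemma~4.1 with $\alpha=1/2$ and $\alpha=0$, and the convergence of $\int_{0}^{\infty}\sigma^{-1/2}e^{-\lambda\sigma}\,d\sigma$ give, for $u,\tilde u\in B_{R}$,
\[
\|\Phi(u)\|_{*}\leq C\bigl(\|u_{0}\|_{H^{1}(\Omega)}+\|f\|_{L^{\infty}((0,\infty);L^{2}(\Omega))}\bigr)+C\sup_{s\geq0}\|b(u(s))\|_{L^{2}(\Omega)},
\]
and an analogous bound for $\|\Phi(u)-\Phi(\tilde u)\|_{*}$ in terms of $\sup_{s}\|b(u(s))-b(\tilde u(s))\|_{L^{2}(\Omega)}$. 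By Lemma~4.3 with $v=0$ and (b.1) one has $\|b(u(s))\|_{L^{2}(\Omega)}\leq C_{B}\|u(s)\|_{H^{1}(\Omega)}^{p}\leq C_{B}R^{p-1}\|u(s)\|_{H^{1}(\Omega)}$, and $\|b(u(s))-b(\tilde u(s))\|_{L^{2}(\Omega)}\leq 2C_{B}R^{p-1}\|u(s)-\tilde u(s)\|_{H^{1}(\Omega)}$ on $B_{R}$; hence $\|\Phi(u)\|_{*}\leq C(\|u_{0}\|_{H^{1}(\Omega)}+\|f\|_{L^{\infty}(L^{2})})+CR^{p}$ and $\|\Phi(u)-\Phi(\tilde u)\|_{*}\leq CR^{p-1}\|u-\tilde u\|_{*}$. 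As $p>1$, choosing $R$ small so that $CR^{p-1}\leq\tfrac12$, and then $\varepsilon_{1},\varepsilon_{2}$ small so that $C(\varepsilon_{1}+\varepsilon_{2})\leq R/2$, makes $\Phi$ a self-contraction of $B_{R}$; standard analytic-semigroup arguments show $\Phi(u)\in C_{b}([0,\infty);V)$. The Banach fixed point theorem then gives a unique $u\in B_{R}$ with $u=\Phi(u)$, and every constant above depends only on $\Omega$, $k$, $p$ (through the constants of Lemma~4.1, $C_{B}$, $\lambda_{1}$ and the Sobolev constant of $H^{1}\hookrightarrow L^{2p}$), hence so do $\varepsilon_{1}$ and $\varepsilon_{2}$.

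Next I would promote this mild solution to a strong solution. Since $f\in L^{2}((0,\infty);L^{2}(\Omega))$, testing $d_{t}u+Au+b(u)=f$ with $Au$, using $\|b(u(t))\|_{L^{2}(\Omega)}\leq C_{B}R^{p-1}\|u(t)\|_{H^{1}(\Omega)}\leq CR^{p-1}\|u(t)\|_{D(A)}$ (by (A.4)) and Young's inequality, one gets for $R$ small
\[
d_{t}\|u(t)\|_{a}^{2}+\|u(t)\|_{D(A)}^{2}\leq 2\|f(t)\|_{L^{2}(\Omega)}^{2},\qquad t>0 .
\]
Integrating over $(0,T)$ and letting $T\to\infty$ shows that $\sup_{t\geq0}\|u(t)\|_{a}^{2}$ and $\int_{0}^{\infty}\|u(t)\|_{D(A)}^{2}\,dt$ are bounded by $\|u_{0}\|_{a}^{2}+2\|f\|_{L^{2}((0,\infty);L^{2}(\Omega))}^{2}$, so $u\in L^{2}((0,\infty);D(A))$; then $d_{t}u=f-Au-b(u)\in L^{2}((0,\infty);L^{2}(\Omega))$, and together with $u\in C_{b}([0,\infty);V)$ this yields $u\in\mathcal{S}(f,u_{0})$. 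Uniqueness among \emph{all} strong solutions is separate: if $u,\tilde u$ are strong solutions, then $w=u-\tilde u$ satisfies $\tfrac12 d_{t}\|w\|_{L^{2}(\Omega)}^{2}+\|w\|_{a}^{2}=-(b(u)-b(\tilde u),w)_{L^{2}(\Omega)}$, and since $u,\tilde u\in C_{b}([0,\infty);V)$, Lemma~4.3 and Young's inequality give $d_{t}\|w\|_{L^{2}(\Omega)}^{2}\leq C\|w\|_{L^{2}(\Omega)}^{2}$ with $w(0)=0$, whence $w\equiv0$ by the Gronwall lemma.

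The step I expect to be the main obstacle is the borderline time regularity at $t=0$. As $u_{0}$ lies only in $D(A^{1/2})$, the pointwise estimate of Lemma~4.1 gives only a bound of order $t^{-1/2}$ for $\|Au(t)\|_{L^{2}(\Omega)}$ as $t\downarrow0$, which fails to be square integrable near the origin; nevertheless $u\in L^{2}((0,\infty);D(A))$ holds, this being precisely maximal $L^{2}$-regularity for the analytic semigroup generated by $-A$. Hence the energy identity above cannot be justified from the crude semigroup estimate alone: one must either invoke maximal regularity, or regularise $u_{0}$, or carry out the integration on $[\varepsilon,T]$ and pass $\varepsilon\downarrow0$ using the Lions--Magenes lemma on $[\varepsilon,T]$ together with $\|u(\varepsilon)\|_{a}\to\|u_{0}\|_{a}$. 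The remaining points -- continuity of the Duhamel term into $V$, and checking that $\varepsilon_{1}$ and $\varepsilon_{2}$ depend on $\Omega,k,p$ only -- are routine.
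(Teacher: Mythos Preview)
Your argument is correct, but the route differs from the paper's. The paper first invokes Ladyzhenskaya's linear maximal-regularity estimate for the heat equation to obtain, by a Banach fixed point on a \emph{finite} interval, a local strong solution on $[0,T_{*}]$ directly in the strong-solution space; it then derives the scalar differential inequality
\[
d_{t}\|\nabla u\|_{L^{2}(\Omega)}^{2}\leq -k\lambda_{1}\|\nabla u\|_{L^{2}(\Omega)}^{2}+2k^{-1}C^{2p}\|\nabla u\|_{L^{2}(\Omega)}^{2p}+2k^{-1}\|f\|_{L^{2}(\Omega)}^{2}
\]
and shows, by an ODE comparison, that the explicit smallness conditions on $\|f\|_{L^{\infty}(L^{2})}$ and $\|\nabla u_{0}\|_{L^{2}}$ keep $\|\nabla u(t)\|_{L^{2}}$ below a fixed threshold for all $t$, so the local solution continues globally. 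By contrast you run the fixed point globally on $C_{b}([0,\infty);V)$, exploiting the exponential decay of the semigroup (Lemma~4.1) to make $\int_{0}^{\infty}\sigma^{-1/2}e^{-\lambda\sigma}\,d\sigma$ finite; this collapses the local-plus-continuation step but produces only a mild solution, which you then upgrade via the energy identity tested against $Au$. The trade-off is that the paper gets the strong-solution regularity for free from the linear theory and never faces the $t\downarrow 0$ issue you flag, while your approach is more self-contained but does need maximal $L^{2}$-regularity (or the approximation you outline) to justify that energy identity near the origin. Your separate Gronwall argument for uniqueness among all strong solutions is a point the paper leaves implicit.
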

\begin{proof}
It is well known in \cite[Theorem 3.2.1]{Ladyzhenskaya} that
\begin{equation}
\begin{split}
&d_{t}u+Au=g & \mathrm{in} \ L^{2}((0,T);L^{2}(\Omega)), \\
&u(0)=u_{0} & \mathrm{in} \ H^{1}_{0}(\Omega)
\end{split}
\end{equation}
has uniquely a strong solution $u$ satisfying
\begin{equation}
k\|\nabla u\|^{2}_{C([0,T];L^{2}(\Omega))}+\|d_{t}u\|^{2}_{L^{2}((0,T);L^{2}(\Omega))}+\|u\|^{2}_{L^{2}((0,T);D(A))}\leq k\|\nabla u_{0}\|^{2}_{L^{2}(\Omega)}+\|g\|^{2}_{L^{2}((0,T);L^{2}(\Omega))}
\end{equation}
for any $g \in L^{2}((0,T);L^{2}(\Omega))$, $u_{0} \in H^{1}_{0}(\Omega)$, $T>0$.
It can be easily seen from (4.6) and the Banach fixed point theorem that there exists a (small) positive constant $T_{*}\leq T$ depending only on $\Omega$, $k$, $p$, $f\in L^{2}((0,T);L^{2}(\Omega))$ and $u_{0} \in H^{1}_{0}(\Omega)$ such that
\begin{equation}
\begin{split}
&d_{t}u+Au=-b(u)+f & \mathrm{in} \ L^{2}((0,T);L^{2}(\Omega)), \\
&u(0)=u_{0} & \mathrm{in} \ H^{1}_{0}(\Omega)
\end{split}
\end{equation}
has uniquely a strong solution $u$ satisfying $u \in L^{2}((0,T_{*});D(A))\cap C([0,T_{*}];H^{1}_{0}(\Omega))$, $d_{t}u \in L^{2}((0,T_{*});L^{2}(\Omega))$.
Moreover, a priori estimate for strong solutions of (4.2) is established as follows:
\begin{equation*}
kd_{t}(\|\nabla u(t)\|^{2}_{L^{2}(\Omega)})+\|d_{t}u(t)\|^{2}_{L^{2}(\Omega)}+k^{2}\lambda_{1}\|\nabla u(t)\|^{2}_{L^{2}(\Omega)}\leq 2\|u(t)\|^{2p}_{L^{2p}(\Omega)}+2\|f(t)\|^{2}_{L^{2}(\Omega)},
\end{equation*}
\begin{equation}
d_{t}(\|\nabla u(t)\|^{2}_{L^{2}(\Omega)})\leq -k\lambda_{1}\|\nabla u(t)\|^{2}_{L^{2}(\Omega)}+2k^{-1}C^{2p}\|\nabla u(t)\|^{2p}_{L^{2}(\Omega)}+2k^{-1}\|f(t)\|^{2}_{L^{2}(\Omega)}
\end{equation}
for any $t>0$, where $\lambda_{1}=\lambda_{1}(\Omega)>0$ is the first eigenvalue of $-\Delta$ with the zero Dirichlet boundary condition, $C=C(\Omega)$ is a positive constant.
Let us assume that
\begin{equation}
\|f\|^{2}_{L^{\infty}((0,\infty);L^{2}(\Omega))}\leq \left(\frac{k^{2}\lambda_{1}}{4C^{2p}}\right)^{p/(p-1)}, \ \|\nabla u_{0}\|^{2}_{L^{2}(\Omega)}\leq \left(\frac{k^{2}\lambda_{1}}{4C^{2p}}\right)^{1/(p-1)}.
\end{equation}
Then it follows from (4.8) that
\begin{equation}
\|\nabla u\|^{2}_{C_{b}([0,\infty);L^{2}(\Omega))}\leq \left(\frac{k^{2}\lambda_{1}}{4C^{2p}}\right)^{1/(p-1)}.
\end{equation}
By applying (4.10) to the unique solvability of (4.7), consequently, (4.2) has uniquely a strong solution provided that $f$ and $u_{0}$ satisfy (4.9).
\end{proof}
\begin{theorem}
Let $n=2, 3$, $1<p\leq n/(n-2)$, $0<\alpha\leq 1$, $f \in L^{\infty}((0,\infty);D(A^{\alpha}))$, $u_{0} \in H^{1}_{0}(\Omega)$, $t_{0}>0$.
Then there exists a positive constant $M(t_{0},\alpha)$ depending only on $\Omega$, $k$, $p$, $f$, $u_{0}$, $t_{0}$ and $\alpha$ such that $\|u\|_{C_{b}([t_{0},\infty);D(A))}\leq M(t_{0},\alpha)$ for any strong solution $u$ of $(4.2)$.
\end{theorem}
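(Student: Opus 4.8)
The goal is to establish hypothesis (H.4) for (4.2), namely a bound for $\|u\|_{C_{b}([t_{0},\infty);D(A))}$ depending only on $\Omega$, $k$, $p$, $f$, $u_{0}$, $t_{0}$ and $\alpha$. The plan is a two-step bootstrap in the variation-of-constants formula, using the smoothing estimate of Lemma 4.1, the embedding of Lemma 4.2 for exponents strictly above the critical one, and the identification $D(A^{1/2})=H^{1}_{0}(\Omega)$ with $\|A^{1/2}w\|_{L^{2}(\Omega)}=k^{1/2}\|\nabla w\|_{L^{2}(\Omega)}$ (valid since $A=-k\Delta$ with the Dirichlet boundary condition is self-adjoint and non-negative). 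First I would record a uniform $H^{1}$-bound: since $u$ is a strong solution, $u\in C_{b}([0,\infty);H^{1}_{0}(\Omega))$, and by the a priori estimate (4.8) together with (4.10) the quantity $R:=\|\nabla u\|_{C_{b}([0,\infty);L^{2}(\Omega))}$ is finite and bounded in terms of $\Omega$, $k$, $p$, $f$ and $u_{0}$ alone. Then Lemma 4.3 with $v=0$ and (b.1) gives $\|b(u(t))\|_{L^{2}(\Omega)}\leq C_{B}\|u(t)\|^{p}_{H^{1}(\Omega)}$, so $g:=f-b(u)\in L^{\infty}((0,\infty);L^{2}(\Omega))$ with norm controlled by the data; as a strong solution, $u$ coincides with the mild solution of $d_{t}u+Au=g$, that is,
\begin{equation*}
u(t)=e^{-(t-s)A}u(s)+\int_{s}^{t}e^{-(t-\sigma)A}g(\sigma)\,d\sigma,\qquad 0\leq s\leq t .
\end{equation*}

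For the first step, fix $\gamma\in(n/4,1)$ (possible for $n=2,3$) and $\lambda\in(0,\lambda_{1})$, and take $s=t_{0}/4$. Applying $A^{\gamma}$ to the formula above and using Lemma 4.1 — writing $A^{\gamma}e^{-(t-s)A}=A^{\gamma-1/2}e^{-(t-s)A}A^{1/2}$ in the semigroup term, the lower bound $t-t_{0}/4\geq t_{0}/4$, and $\int_{0}^{\infty}\tau^{-\gamma}e^{-\lambda\tau}\,d\tau<\infty$ (as $\gamma<1$) in the Duhamel integral — one bounds $\sup_{t\geq t_{0}/2}\|A^{\gamma}u(t)\|_{L^{2}(\Omega)}$ in terms of $R$, $\|g\|_{L^{\infty}((0,\infty);L^{2}(\Omega))}$, $t_{0}$ and $\gamma$. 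Since $\gamma>n/4$, Lemma 4.2 (with $q=\infty$) then gives $K_{\infty}:=\sup_{t\geq t_{0}/2}\|u(t)\|_{L^{\infty}(\Omega)}<\infty$ with the same dependence. As $s\mapsto|s|^{p-1}s$ is $C^{1}$ with derivative $p|s|^{p-1}$ and $u(t)\in H^{1}_{0}(\Omega)$, the chain rule yields $b(u(t))=-|u(t)|^{p-1}u(t)\in H^{1}_{0}(\Omega)=D(A^{1/2})$ with
\begin{equation*}
\|A^{1/2}b(u(t))\|_{L^{2}(\Omega)}\leq C\,p\,K_{\infty}^{p-1}\|\nabla u(t)\|_{L^{2}(\Omega)}\leq C\,p\,K_{\infty}^{p-1}R,\qquad t\geq t_{0}/2 .
\end{equation*}
Setting $\beta:=\min\{\alpha,1/2\}>0$ and using $f\in L^{\infty}((0,\infty);D(A^{\alpha}))\subset L^{\infty}((0,\infty);D(A^{\beta}))$, we conclude $g=f-b(u)\in L^{\infty}([t_{0}/2,\infty);D(A^{\beta}))$ with norm controlled by the data, $t_{0}$ and $\alpha$.

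For the second step, take $s=t_{0}/2$ and apply $A$: for $t\geq t_{0}$,
\begin{equation*}
\|Au(t)\|_{L^{2}(\Omega)}\leq\|A^{1/2}e^{-(t-t_{0}/2)A}\|_{\mathcal{B}(L^{2}(\Omega))}\,\|A^{1/2}u(t_{0}/2)\|_{L^{2}(\Omega)}+\int_{t_{0}/2}^{t}\|A^{1-\beta}e^{-(t-\sigma)A}\|_{\mathcal{B}(L^{2}(\Omega))}\,\|A^{\beta}g(\sigma)\|_{L^{2}(\Omega)}\,d\sigma .
\end{equation*}
By Lemma 4.1 the first term is $\leq C(t_{0})R$, and since $1-\beta<1$ the bound $\|A^{1-\beta}e^{-\tau A}\|_{\mathcal{B}(L^{2}(\Omega))}\leq C_{1-\beta,\lambda}\tau^{-(1-\beta)}e^{-\lambda\tau}$ is integrable over $(0,\infty)$, so the second term is $\leq C\,\|g\|_{L^{\infty}([t_{0}/2,\infty);D(A^{\beta}))}$. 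Combining this with $\|u(t)\|_{L^{2}(\Omega)}\leq C_{\Omega}R$ (Poincar\'e) and with the fact that analyticity of $\{e^{-tA}\}_{t\geq 0}$ together with $g\in L^{\infty}([t_{0}/2,\infty);D(A^{\beta}))$ makes $t\mapsto u(t)$ continuous into $D(A)$ on $(t_{0}/2,\infty)$, we obtain $\|u\|_{C_{b}([t_{0},\infty);D(A))}\leq M(t_{0},\alpha)$ with $M(t_{0},\alpha)$ depending only on $\Omega$, $k$, $p$, $f$, $u_{0}$, $t_{0}$ and $\alpha$, which is the assertion.

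The step I expect to be the main obstacle is the second one, i.e.\ passing from the sub-critical regularity $u(t)\in D(A^{\gamma})$, $\gamma<1$, to a genuine $D(A)$-bound: since $\|Ae^{-\tau A}\|_{\mathcal{B}(L^{2}(\Omega))}$ behaves like $\tau^{-1}$ as $\tau\downarrow 0$, which is not integrable, the $D(A)$-bound cannot be read off the variation-of-constants formula while $g$ only lies in $L^{\infty}((0,\infty);L^{2}(\Omega))$. One must first gain the uniform $L^{\infty}$-bound of the first step and then exploit the extra spatial regularity $b(u)=-|u|^{p-1}u\in D(A^{1/2})=H^{1}_{0}(\Omega)$ — resting on the chain-rule estimate and the identification of $D(A^{1/2})$ — together with $f\in D(A^{\alpha})$; the hypothesis $\alpha>0$ is precisely what makes $\beta>0$, hence $1-\beta<1$ and the kernel integrable in the final estimate.
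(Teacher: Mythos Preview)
Your argument is correct and matches the paper's proof: a two-step bootstrap through the variation-of-constants formula, first to $D(A^{\gamma})$ with $\gamma\in(n/4,1)$ to secure a uniform $L^{\infty}$-bound and hence $b(u)\in L^{\infty}([t_{0}/2,\infty);D(A^{1/2}))$ via the chain rule, then to $D(A)$ using the integrability of $\tau^{-(1-\beta)}e^{-\lambda\tau}$. The only minor differences are that the paper keeps $b(u)$ and $f$ separate in the final estimate (with kernel exponents $1/2$ and $1-\alpha$ respectively) rather than merging them into $g$ with your $\beta=\min\{\alpha,1/2\}$, and that your appeal to (4.8)--(4.10) for a data-only bound on $R$ tacitly assumes the smallness condition (4.9); this is harmless, however, since $R<\infty$ is already guaranteed by $u\in C_{b}([0,\infty);V)$, and the paper's own constant likewise depends on $\|u(t_{0}/2)\|_{H^{1}(\Omega)}$.
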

\begin{proof}
It is well known in \cite[Theorems 2.5.2 and 7.3.6]{Pazy} that $A$ is a sectorial operator in $L^{2}(\Omega)$, $\mathrm{Re}\sigma(A)>0$.
Since $u \in C_{b}([0,\infty);H^{1}_{0}(\Omega))$, it follows from \cite[Lemma 3.3.2]{Henry} that
\begin{equation}
u(t)=e^{-tA}u_{0}-\int^{t}_{0}e^{-(t-s)A}b(u)(s)ds+\int^{t}_{0}e^{-(t-s)A}f(s)ds
\end{equation}
for any $t\geq 0$.
In the case where $1/2<\beta<1$, it can be easily seen from (4.11) that there exists a positive constant $M(t_{0})$ depending only on $\Omega$, $k$, $p$, $\|f\|_{L^{\infty}((0,\infty);L^{2}(\Omega))}$, $\|u_{0}\|_{H^{1}(\Omega)}$,  $t_{0}$ and $\beta$ such that $\|u\|_{C_{b}([t_{0},\infty);D(A^{\beta}))}\leq M(t_{0})$.
Let $n/4<\beta<1$.
Then, since $D(A^{1/2})=H^{1}_{0}(\Omega)$, $(u,v)_{D(A^{1/2})}=(u,v)_{a}$,
\begin{equation*}
\begin{split}
\|b(u)(t)\|^{2}_{D(A^{1/2})}&\leq kp^{2}C^{2(p-1)}\|u(t)\|^{2(p-1)}_{L^{\infty}(\Omega)}\|\nabla u(t)\|^{2}_{L^{2}(\Omega)} \\
&\leq kp^{2}C^{2p}\|u(t)\|^{2p}_{D(A^{\beta})} \\
&\leq kp^{2}C^{2p}M(t_{0})^{2p},
\end{split}
\end{equation*}
\begin{equation}
\|b(u)(t)\|_{D(A^{1/2})}\leq k^{1/2}pC^{p}M(t_{0})^{p}
\end{equation}
for any $t\geq t_{0}$, where $C=C(\Omega)$ is a positive constant.
In the case where $\beta=1$, it follows from (4.11) that
\begin{equation}
\begin{split}
\|u(t)\|_{D(A)}\leq& C_{1/2,\lambda}(t-t_{0}/2)^{-1/2}e^{-\lambda(t-t_{0}/2)}\|u(t_{0}/2)\|_{D(A^{1/2})} \\
&+C_{1/2,\lambda}\int^{t}_{t_{0}/2}(t-s)^{-1/2}e^{-\lambda(t-s)}\|b(u)(s)\|_{D(A^{1/2})}ds \\
&+C_{1-\alpha,\lambda}\int^{t}_{t_{0}/2}(t-s)^{-1+\alpha}e^{-\lambda(t-s)}\|f(s)\|_{D(A^{\alpha})}ds
\end{split}
\end{equation}
for any $t\geq t_{0}>t_{0}/2$.
Therefore, we can conclude from (4.12), (4.13) that there exists a positive constant $M(t_{0},\alpha)$ depending only on $\Omega$, $k$, $p$, $\|f\|_{L^{\infty}((0,\infty);D(A^{\alpha}))}$, $\|u(t_{0}/2)\|_{H^{1}(\Omega)}$, $t_{0}$ and $\alpha$ such that $\|u\|_{C_{b}([t_{0},\infty);D(A))}\leq M(t_{0},\alpha)$.
\end{proof}
As for (4.2), we can obtain the following theorem:
\begin{theorem}
Let $n=2, 3$, $f, g \in L^{\infty}((0,\infty);L^{2}(\Omega))$, $u_{0}, v_{0} \in H^{1}_{0}(\Omega)$, $u \in \mathcal{S}(f,u_{0})$, $v \in \mathcal{S}(g,v_{0})$, and assume only $\mathrm{(H.3)}$.
Then there exists a positive constant $\delta_{4}$ depending only on $\Omega$, $A$, $B$, $f$, $g$, $M(f,u_{0},t_{0})$ and $M(g,v_{0},t_{0})$ such that if $0<d_{N}\leq\delta_{4}$, $u(x_{j},t)-v(x_{j},t)\rightarrow 0$ $(j=1, \cdots, N)$, $f(t)-g(t)\rightarrow 0$ in $L^{2}(\Omega)$ as $t\rightarrow \infty$, then $u(t)-v(t)\rightarrow 0$ in $H^{1}_{0}(\Omega)$ as $t\rightarrow \infty$.
\end{theorem}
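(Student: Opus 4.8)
The plan is to run the energy argument of Theorem 2.3 essentially verbatim, with one structural change: replace the use of (B.2) by Lemma 4.3. The point is that Lemma 4.3 controls $\|b(u)-b(v)\|_{L^{2}(\Omega)}$ by the $H^{1}(\Omega)$-norms of $u,v$, not by their $H^{2}(\Omega)$-norms, and every strong solution of (4.2) belongs to $C_{b}([0,\infty);H^{1}_{0}(\Omega))$ by Definition 2.1. Thus the a priori bounds needed to close the estimate are built into the notion of a strong solution, and (H.4) is \emph{not} needed. On the other hand, without (H.4) we have no bound on $\|u(t)\|_{D(A)}$, so we cannot invoke the Rellich--Kondrachov compactness step used in Theorems 2.2 and 2.3 to upgrade $H^{1}_{0}$-convergence to $C^{0,\mu}$-convergence; this is precisely why the conclusion here is only $u(t)-v(t)\to 0$ in $H^{1}_{0}(\Omega)$.

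In detail, put $M=\|u\|^{p-1}_{C_{b}([0,\infty);H^{1}(\Omega))}+\|v\|^{p-1}_{C_{b}([0,\infty);H^{1}(\Omega))}<\infty$; this quantity plays the role that $M(p,t_{0})$ plays in the proof of Theorem 2.3. First I would subtract the equation $d_{t}v+Av+b(v)=g$ from (4.2) written for $u$ to obtain $d_{t}(u-v)+A(u-v)+b(u)-b(v)=f-g$, take the $L^{2}(\Omega)$-scalar product with $A(u-v)$, and estimate
\begin{equation*}
\tfrac12 d_{t}(\|u-v\|^{2}_{a})+\|u-v\|^{2}_{D(A)}\le \|b(u)-b(v)\|_{L^{2}(\Omega)}\|A(u-v)\|_{L^{2}(\Omega)}+\|f-g\|_{L^{2}(\Omega)}\|A(u-v)\|_{L^{2}(\Omega)},
\end{equation*}
bounding the first term on the right by Lemma 4.3 as $C_{B}M\|u-v\|_{H^{1}(\Omega)}\|u-v\|_{D(A)}$ and the second by Young's inequality. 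Substituting Lemma 2.3 for $\|u-v\|_{H^{1}(\Omega)}$, absorbing the resulting $\|u-v\|^{2}_{D(A)}$-terms, and using (A.4) together with the equivalence $\|\cdot\|_{a}\sim\|\cdot\|_{H^{1}(\Omega)}$ to pass to $\|\cdot\|_{a}$ reproduces, exactly as in (3.14)--(3.16),
\begin{equation*}
d_{t}(\|(u-v)(t)\|^{2}_{a})+\lambda\|(u-v)(t)\|^{2}_{a}\le h(t),\qquad t\ge t_{0},
\end{equation*}
with $\lambda=\tfrac{a_{1}^{2}}{a_{4}^{2}}(1-2C_{B}C_{5}Md_{N}^{1/4})>0$ once $0<d_{N}<(2C_{B}C_{5}M)^{-4}$, and $h(t)=2C_{B}^{2}C_{4}^{2}M^{2}d_{N}^{-1/2}\eta_{N}((u-v)(t))^{2}+\|(f-g)(t)\|^{2}_{L^{2}(\Omega)}$. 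Accordingly I would take any $\delta_{4}\in(0,(2C_{B}C_{5}M)^{-4})$.

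It then remains to pass to the limit. From $u(x_{j},t)-v(x_{j},t)\to 0$ $(j=1,\dots,N)$ we get $\eta_{N}((u-v)(t))\to 0$, and $f(t)-g(t)\to 0$ in $L^{2}(\Omega)$, so $h(t)\to 0$ as $t\to\infty$. Given $\varepsilon>0$, choose $t(\varepsilon)$ with $|h(t)|\le\varepsilon$ for $t\ge t(\varepsilon)$ and apply the Gronwall lemma on $[t(\varepsilon),\infty)$:
\begin{equation*}
\|(u-v)(t)\|^{2}_{a}\le\|(u-v)(t(\varepsilon))\|^{2}_{a}e^{-\lambda(t-t(\varepsilon))}+\frac{\varepsilon}{\lambda}\bigl(1-e^{-\lambda(t-t(\varepsilon))}\bigr),\qquad t\ge t(\varepsilon).
\end{equation*}
Letting $t\to\infty$ gives $\limsup_{t\to\infty}\|(u-v)(t)\|^{2}_{a}\le\varepsilon/\lambda$, and since $\varepsilon>0$ is arbitrary, $\|(u-v)(t)\|_{a}\to 0$; as $\|\cdot\|_{a}$ is equivalent to $\|\cdot\|_{H^{1}(\Omega)}$ on $V=H^{1}_{0}(\Omega)$, this is $u(t)-v(t)\to 0$ in $H^{1}_{0}(\Omega)$. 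There is no genuine obstacle in the computation; the one thing to notice — and essentially the whole content of the statement — is that Lemma 4.3 converts (B.2) into an $H^{1}$-estimate, so the boundedness already guaranteed by Definition 2.1 takes over the role of the hypothesis (H.4) that was needed in Theorem 2.3.
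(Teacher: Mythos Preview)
Your proposal is correct and matches what the paper intends: the paper states Theorem~4.3 immediately after Lemma~4.3 without giving a separate proof, evidently because the argument is precisely the one you wrote --- rerun the energy estimate of Theorem~2.3 with (B.2) replaced by the $H^{1}$-based inequality (4.3), so that the bound $u,v\in C_{b}([0,\infty);H^{1}_{0}(\Omega))$ from Definition~2.1 substitutes for (H.4). Your identification of why the $C^{0,\mu}$ upgrade is lost (no $D(A)$-bound, hence no Rellich--Kondrachov) is also exactly the point.
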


\subsection{Navier-Stokes equations}
The initial-boundary value problem for the Navier-Stokes equations is described as follows:
\begin{equation}
\begin{split}
&\mathrm{div}u=0 & \mathrm{in} \ \Omega\times(0,\infty), \\
&\partial_{t}u-\mu\Delta u+(u\cdot\nabla)u=-\nabla p+f & \mathrm{in} \ \Omega\times(0,\infty), \\
&u|_{t=0}=u_{0} & \mathrm{in} \ \Omega, \\
&u|_{\partial\Omega}=0 & \mathrm{on} \ \partial\Omega\times(0,\infty),
\end{split}
\end{equation}
where $\mu>0$, $f$ is an external force field, $u_{0}$ is an initial data of $u$.

Let us introduce the solenoidal function spaces to utilize the strong formulation of (4.14).
$C^{\infty}_{0,\sigma}(\Omega):=\{u \in (C^{\infty}_{0}(\Omega))^{n} \ ; \ \mathrm{div}u=0\}$.
$L^{2}_{\sigma}(\Omega)$ is the completion of $C^{\infty}_{0,\sigma}(\Omega)$ in $(L^{2}(\Omega))^{n}$.
$L^{2}_{\sigma}(\Omega)$ is characterized as $L^{2}_{\sigma}(\Omega)=\{u \in (L^{2}(\Omega))^{n} \ ; \ \mathrm{div}u=0, \ \nu\cdot u|_{\partial\Omega}=0\}$, where $\nu$ is the outward normal vector on $\partial\Omega$.
It follows from the Helmholtz decomposition that $(L^{2}(\Omega))^{n}$ is decomposed into $(L^{2}(\Omega))^{n}=L^{2}_{\sigma}(\Omega)\oplus L^{2}_{\pi}(\Omega)$, where $L^{2}_{\pi}(\Omega):=\{\nabla p \ ; \ p \in H^{1}(\Omega) \}$.
Let $P_{2}$ be the orthogonal projection of $(L^{2}(\Omega))^{n}$ onto $L^{2}_{\sigma}(\Omega)$.
See, for example, \cite[Chapter 1]{Temam} on basic properties of the Helmholtz decomposition.

Let $H^{n}=L^{2}_{\sigma}(\Omega)$, $V^{n}=(H^{1}_{0}(\Omega))^{n}\cap L^{2}_{\sigma}(\Omega)$, $P=P_{2}$.
Then we can make use of the strong formulation to rewrite (4.14) by
\begin{equation}
\begin{split}
&d_{t}u+Au+B(u)=f & \mathrm{in} \ L^{2}((0,\infty);L^{2}_{\sigma}(\Omega)), \\
&u(0)=u_{0} & \mathrm{in} \ (H^{1}_{0}(\Omega))^{n}\cap L^{2}_{\sigma}(\Omega),
\end{split}
\end{equation}
where $Au=-P_{2}(\mu\Delta u)$, $B(u)=P_{2}(u\cdot\nabla)u$.
It follows from \cite[Lemma 3.3.7]{Temam} that $A$ satisfies (A.1)--(A.4).
It is clear from the following lemma that $Bu=B(u)$ satisfies $\mathrm{(B.1)}$, $\mathrm{(B.2)}$.
\begin{lemma}
Let $n=2,3$.
Then there exists a positive constant $C_{B}$ depending only on $\Omega$ such that we have the following inequality:
\begin{equation}
\|B(u)-B(v)\|_{(L^{2}(\Omega))^{n}}\leq C_{B}(\|u\|_{(H^{2}(\Omega))^{n}}+\|v\|_{(H^{2}(\Omega))^{n}})\|u-v\|_{(H^{1}(\Omega))^{n}}
\end{equation}
for any $u,v \in (H^{2}(\Omega))^{n}$.
\end{lemma}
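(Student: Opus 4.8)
The plan is to reduce the claimed inequality to a purely bilinear estimate for $(u\cdot\nabla)u-(v\cdot\nabla)v$ and then close it with the Sobolev embedding theorem, the restriction $n=2,3$ entering precisely through the embedding of $H^{2}(\Omega)$ into $L^{\infty}(\Omega)$.

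First I would use that $P_{2}$ is the orthogonal projection of $(L^{2}(\Omega))^{n}$ onto $L^{2}_{\sigma}(\Omega)$, so that $\|P_{2}\|_{\mathcal{B}((L^{2}(\Omega))^{n})}\leq 1$, and hence
\[
\|B(u)-B(v)\|_{(L^{2}(\Omega))^{n}}=\|P_{2}[(u\cdot\nabla)u-(v\cdot\nabla)v]\|_{(L^{2}(\Omega))^{n}}\leq\|(u\cdot\nabla)u-(v\cdot\nabla)v\|_{(L^{2}(\Omega))^{n}}.
\]
It then suffices to estimate the right-hand side for $u,v\in(H^{2}(\Omega))^{n}$, and for this I would use the bilinear splitting
\[
(u\cdot\nabla)u-(v\cdot\nabla)v=((u-v)\cdot\nabla)u+(v\cdot\nabla)(u-v),
\]
treating the two summands asymmetrically. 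In the first summand no derivative falls on $u-v$, so Hölder's inequality with exponents $4$ and $4$ gives $\|((u-v)\cdot\nabla)u\|_{(L^{2}(\Omega))^{n}}\leq\|u-v\|_{(L^{4}(\Omega))^{n}}\|\nabla u\|_{(L^{4}(\Omega))^{n\times n}}$; since $n=2,3$ the Sobolev embedding theorem gives $H^{1}(\Omega)\hookrightarrow L^{4}(\Omega)$, which applied to $u-v$ and to each first-order derivative of $u$ bounds this summand by $C\|u-v\|_{(H^{1}(\Omega))^{n}}\|u\|_{(H^{2}(\Omega))^{n}}$. In the second summand a derivative does fall on $u-v$, so I would instead pair $v$ measured in $L^{\infty}$ with $\nabla(u-v)$ measured in $L^{2}$: $\|(v\cdot\nabla)(u-v)\|_{(L^{2}(\Omega))^{n}}\leq\|v\|_{(L^{\infty}(\Omega))^{n}}\|\nabla(u-v)\|_{(L^{2}(\Omega))^{n\times n}}$, and since $n=2,3$ we have $H^{2}(\Omega)\hookrightarrow C(\overline{\Omega})\hookrightarrow L^{\infty}(\Omega)$, so this summand is at most $C\|v\|_{(H^{2}(\Omega))^{n}}\|u-v\|_{(H^{1}(\Omega))^{n}}$. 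Adding the two bounds and letting $C_{B}$ be the larger of the two embedding constants yields (4.16); that $B(0)=0$ is immediate, and with $p=2$ the estimate (4.16) gives (B.2).

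The hard part, though it is a mild one, is the choice of splitting: the term carrying a derivative of $u-v$ must be paired with a factor of $v$, since it is $v$ rather than $u-v$ that is controlled in $H^{2}$, and the resulting need to place $v$ in $L^{\infty}$ is precisely what forces $n=2,3$ via $H^{2}(\Omega)\hookrightarrow L^{\infty}(\Omega)$; the undifferentiated factor $u-v$ can then be left in $L^{4}$, which contains $H^{1}(\Omega)$ in these dimensions.
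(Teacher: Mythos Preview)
Your proof is correct and follows essentially the same strategy as the paper's: split the bilinear difference, use $H^{2}\hookrightarrow L^{\infty}$ on the term carrying $\nabla(u-v)$, and close the other term with H\"older plus Sobolev. The only cosmetic differences are that the paper uses the mirror splitting $(u\cdot\nabla)(u-v)+((u-v)\cdot\nabla)v$ (roles of $u$ and $v$ interchanged) and pairs the undifferentiated term with H\"older exponents $(6,3)$ rather than your $(4,4)$; both choices are equally valid for $n=2,3$.
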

\begin{proof}
It is obvious that
\begin{equation*}
\begin{split}
B(u)-B(v)&=P_{2}(u\cdot\nabla)u-P_{2}(v\cdot\nabla)v \\
&=P_{2}(u\cdot\nabla)(u-v)+P_{2}((u-v)\cdot\nabla)v
\end{split}
\end{equation*}
for any $u, v \in (H^{2}(\Omega))^{n}$.
Let us notice from the Sobolev embedding theorem that $(H^{1}(\Omega))^{n}$ and $(H^{2}(\Omega))^{n}$ are continuously embedded in $(L^{3}(\Omega))^{n}$ and $(L^{\infty}(\Omega))^{n}$ respectively.
Then we obtain that
\begin{equation*}
\begin{split}
\|P_{2}(u\cdot\nabla)(u-v)\|_{(L^{2}(\Omega))^{n}}&\leq \|(u\cdot\nabla)(u-v)\|_{(L^{2}(\Omega))^{n}} \\
&\leq\|u\|_{(L^{\infty}(\Omega))^{n}}\|\nabla(u-v)\|_{(L^{2}(\Omega))^{n}} \\
&\leq C_{1}\|u\|_{(H^{2}(\Omega))^{n}}\|u-v\|_{(H^{1}(\Omega))^{n}}
\end{split}
\end{equation*}
for any $u, v \in (H^{2}(\Omega))^{n}$, where $C_{1}=C_{1}(\Omega)$ is a positive constant.
It follows from the H\"{o}lder inequality and the same argument as above that
\begin{equation*}
\begin{split}
\|P_{2}((u-v)\cdot\nabla)v\|_{(L^{2}(\Omega))^{n}}&\leq \|((u-v)\cdot\nabla)v\|_{(L^{2}(\Omega))^{n}} \\
&\leq\|u-v\|_{(L^{6}(\Omega))^{n}}\|\nabla v\|_{(L^{3}(\Omega))^{n}} \\
&\leq C_{2}\|v\|_{(H^{2}(\Omega))^{n}}\|u-v\|_{(H^{1}(\Omega))^{n}}
\end{split}
\end{equation*}
for any $u, v \in (H^{2}(\Omega))^{n}$, where $C_{2}=C_{2}(\Omega)$ is a positive constant.
The above two inequalities lead clearly to (4.16).
\end{proof}
It is well known in \cite{Foias 2} that (H.3), (H.4) hold for (4.15) in the case where $n=2$, (H.3) implies (H.4) in the case where $n=3$.

\section*{Acknowledgment}
I am grateful to Professor Masahiro Yamamoto (Graduate School of Mathematical Sciences, The University of Tokyo) for his valuable advice and constant encouragement.


\end{document}